\numberwithin{equation}{section}
\theoremstyle{plain}
\newtheorem{theorem}{Theorem}[section]
\newtheorem{lemma}[theorem]{Lemma}
\theoremstyle{remark}
\newtheorem{remark}[theorem]{Remark}
\newtheorem{example}[theorem]{Example}
\newtheorem*{ack}{Acknowledgement}
\theoremstyle{definition}
\newtheorem{question}[theorem]{Question}
\renewcommand{\atop}[2]{\genfrac{}{}{0pt}{}{#1}{#2}}
\newcommand{\LL}{\mathcal{L}}
\newcommand{\QQ}{\mathcal{Q}}
\newcommand{\HH}{\mathcal{H}}
\newcommand{\PP}{\mathcal{P}}
\newcommand{\MM}{\mathcal{M}}
\newcommand{\EE}{\mathcal{E}}
\newcommand{\CC}{\mathcal{C}}
\newcommand{\R}{\mathbb{R}}
\newcommand{\N}{\mathbb{N}}
\newcommand{\hhh}{\mathtt{h}}
\newcommand{\iii}{\mathtt{i}}
\newcommand{\jjj}{\mathtt{j}}
\newcommand{\eps}{\varepsilon}
\newcommand{\fii}{\varphi}
\newcommand{\ualpha}{\overline{\alpha}}
\newcommand{\lalpha}{\underline{\alpha}}
\newcommand{\ulambda}{\overline{\lambda}}
\newcommand{\llambda}{\underline{\lambda}}
\DeclareMathOperator{\dimm}{dim_M}
\DeclareMathOperator{\dimh}{dim_H}
\DeclareMathOperator{\diml}{dim_L}
\DeclareMathOperator{\dist}{dist}
\DeclareMathOperator{\diam}{diam}
\DeclareMathOperator{\proj}{proj}
\DeclareMathOperator{\conv}{conv}
\DeclareMathOperator{\spt}{spt}
\begin{document}

\title{Dimension and measures on sub-self-affine sets}

\author{Antti K\"aenm\"aki \and Markku Vilppolainen}

\address{Department of Mathematics and Statistics \\
         P.O. Box 35 (MaD) \\
         FI-40014 University of Jyv\"askyl\"a \\
         Finland}
\email{antti.kaenmaki@jyu.fi}
\email{markku.vilppolainen@jyu.fi}

\thanks{AK acknowledges the support of the Academy of Finland (project \#114821)}
\subjclass[2000]{Primary 28A80; Secondary 37C45.}
\keywords{Sub-self-affine set, equilibrium measure, Hausdorff dimension}
\date{\today}

\begin{abstract}
  We show that in a typical sub-self-affine set, the Hausdorff and the Minkowski dimensions coincide and equal the zero of an appropriate topological pressure. This gives a partial positive answer to the question of Falconer. We also study the properties of the topological pressure and the existence and the uniqueness of natural measures supported on a sub-self-affine set.
\end{abstract}

\maketitle

\section{Introduction}

An \emph{iterated function system (IFS)} on $\R^d$ is a finite collection of strictly contractive mappings $f_1,\ldots,f_\kappa \colon \R^d \to \R^d$. For such a system there exists a unique nonempty compact set $E \subset \R^d$ satisfying
\begin{equation} \label{eq:intro_invariant}
  E = \bigcup_{i=1}^\kappa f_i(E),
\end{equation}
see Hutchinson \cite{Hutchinson1981}. When the mappings are similitudes, the set $E$ satisfying \eqref{eq:intro_invariant} is called \emph{self-similar}, and if they are affine, then $E$ is called \emph{self-affine}. There are many works focusing on calculating the dimension and measures of these sets, see, for example \cite{Hutchinson1981, KaenmakiVilppolainen2008, Falconer1988, Falconer1992, HueterLalley1995, FalconerMiao2007, Baranski2007, KaenmakiShmerkin2009}.

Falconer \cite{Falconer1995} introduced a generalization of self-similar sets by relaxing the equality in \eqref{eq:intro_invariant} to inclusion. He termed a compact set satisfying such an inclusion \emph{sub-self-similar}. The same generalization can, of course, be done with self-affine sets. If the mappings $f_1,\ldots,f_\kappa$ are affine, then any nonempty compact set $E \subset \R^d$ satisfying
\begin{equation*}
  E \subset \bigcup_{i=1}^\kappa f_i(E)
\end{equation*}
is called \emph{sub-self-affine}. These sets include many interesting examples, such as sub-self-similar sets, graph directed self-affine sets, unions of self-affine sets, and topological boundaries of self-affine sets. The reader is referred to \cite[\S 2]{Falconer1995} for a more comprehensive list of examples.

For sub-self-similar sets satisfying a sufficient separation condition, the open set condition, Falconer \cite{Falconer1995} proved that the Hausdorff and the Minkowski dimensions coincide and equal the zero of an appropriate topological pressure. In Theorem \ref{thm:ssa_ae_dim}, we show that the same result is true for a typical sub-self-affine set and moreover, such a set carries an invariant measure of full Hausdorff dimension. This gives a partial positive answer to the question of Falconer \cite{Falconer1995}. The proof of Theorem \ref{thm:ssa_ae_dim} is based on the existence of an equilibrium measure. The existence of such measures on a self-affine set was proved by K\"aenm\"aki \cite{Kaenmaki2004}. To our knowledge, it is the first proof for the existence of an ergodic equilibrium measure that is not based on the existence of the Gibbs-type measure. Recall also the question of Falconer \cite[\S 6]{Falconer1994}. Cao, Feng, and Huang \cite{CaoFengHuang2008} have later studied the variational principle in a more general setting. The uniqueness of the equilibrium measure was implicitly asked in \cite{Kaenmaki2004}. In Example \ref{ex:not_unique}, we answer this question in the negative. Sufficient conditions for the uniqueness can be found in Lemma \ref{thm:equal_maps} and Theorem \ref{thm:semiconformal_unique}.

We also study the behavior of the topological pressure. There are at most countably many points where the pressure is not differentiable. We exhibit a nondifferentiable pressure in Example \ref{ex:nondifferentiable}. A sufficient condition for the existence of the derivative is given in Theorem \ref{thm:derivative_exists_when_semiconf}.

\section{Setting and preliminaries}

Throughout the article, we use the following notation: Let $0 < \ualpha < 1$ and $I$ be a finite set with cardinality $\kappa := \# I \ge 2$. Put $I^* = \bigcup_{n=1}^\infty I^n$ and $I^\infty = I^\N$. For each $\iii \in I^*$, there is $n \in \N$ such that $\iii = (i_1,\ldots,i_n) \in I^n$. We call this $n$ the \emph{length} of $\iii$ and we set $|\iii|=n$. The length of elements in $I^\infty$ is infinite. Moreover, if $\iii \in I^*$ and $\jjj \in I^* \cup I^\infty$, then by the notation $\iii\jjj$ we mean the element obtained by juxtaposing the terms of $\iii$ and $\jjj$. For $\iii \in I^*$ and $K \subset I^\infty$, we define $[\iii;K] = \{ \iii\jjj : \jjj \in K \}$ and we call the set $[\iii] := [\iii;I^\infty]$ a \emph{cylinder set} of level $|\iii|$. If $\jjj \in I^* \cup I^\infty$ and $1 \le n < |\jjj|$, we define $\jjj|_n$ to be the unique element $\iii \in I^n$ for which $\jjj \in [\iii]$. If $\jjj \in I^*$ and $n \ge |\jjj|$ then $\jjj|_n = \jjj$. We also set $\iii^- = \iii|_{|\iii|-1}$. We say that the elements $\iii,\jjj \in I^*$ are \emph{incomparable} if $[\iii] \cap [\jjj] = \emptyset$.

Defining
\begin{equation*}
  |\iii - \jjj| =
  \begin{cases}
    \ualpha^{\min\{ k-1 : \iii|_k \ne \jjj|_k \}}, \quad &\iii \ne \jjj \\
    0, &\iii = \jjj
  \end{cases}
\end{equation*}
whenever $\iii,\jjj \in I^\infty$, the couple $(I^\infty,|\cdot|)$ is a compact metric space. We call $(I^\infty,|\cdot|)$ a \emph{symbol space} and an element $\iii = (i_1,i_2,\ldots) \in I^\infty$ a \emph{symbol}. If there is no danger of misunderstanding, let us also call an element $\iii \in I^*$ a symbol. Define the \emph{left shift} $\sigma$ by setting
\begin{equation*}
  \sigma(i_1,i_2,\ldots) = (i_2,i_3,\ldots).
\end{equation*}
It is easy to see that $\sigma$ is a continous transformation on the symbol space. By the notation $\sigma(i_1,\ldots,i_n)$, we mean the symbol $(i_2,\ldots,i_n) \in I^{n-1}$. Observe that to be precise in our definitions, we need an ``empty symbol'', that is, a symbol with zero length.

The singular values $\|A\| = \alpha_1(A) \ge \cdots \ge \alpha_d(A) > 0$ of an invertible matrix $A \in \R^{d\times d}$ are the square roots of the eigenvalues of the positive definite matrix $A^*A$, where $A^*$ is the transpose of $A$. Given such a matrix $A$, we define the singular value function to be
\begin{equation*}
  \fii^t(A) = \alpha_1(A) \cdots \alpha_l(A) \alpha_{l+1}(A)^{t-l},
\end{equation*}
where $0 \le t < d$ and $l = \lfloor t \rfloor$ is the integer part of $t$. When $t \ge d$, we set $\fii^t(A) = |\det(A)|^{t/d}$ for completeness.

For each $i \in I$, fix an invertible matrix $A_i \in \R^{d\times d}$ such that $\|A_i\| \le \ualpha$. Clearly the products $A_\iii = A_{i_1}\cdots A_{i_n}$ are also invertible for all $\iii \in I^n$ and $n \in \N$. If we let
$\lalpha = \min_{i \in I} \alpha_d(A_i) > 0$, it follows that
\begin{equation} \label{eq:cylinder1}
  \fii^t(A_\iii)\lalpha^{\delta |\iii|}
  \le \fii^{t+\delta}(A_\iii)
  \le \fii^t(A_\iii)\ualpha^{\delta |\iii|}
\end{equation}
for all $t,\delta \ge 0$ and $\iii \in I^*$. According to \cite[Corollary V.1.1]{Temam1988} and \cite[Lemma 2.1]{Falconer1988}, we have
\begin{equation} \label{eq:cylinder2}
  \fii^t(A_{\iii\jjj}) \le \fii^t(A_\iii) \fii^t(A_\jjj)
\end{equation}
for all $t \ge 0$ and $\iii,\jjj \in I^*$.

For $K \subset I^\infty$ we set $K_n = \{ \iii|_n \in I^n : \iii \in K \}$. If $\sigma(K) \subset K$, then it follows that
\begin{equation} \label{eq:sub_shift}
  \iii\jjj \in K_{n+m} \implies \iii \in K_n \text{ and } \jjj \in K_m
\end{equation}
for all $\iii \in I^n$ and $\jjj \in I^m$. The converse does not necessarily hold: choose $I = \{ 0,1 \}$ and define $K = \{ (0,0,\ldots), (1,1,\ldots) \} \subset I^\infty$ for a trivial counterexample. We also set $K_* = \bigcup_{n=1}^\infty K_n$. Now \eqref{eq:cylinder2} together with \eqref{eq:sub_shift} implies that
\begin{align*}
  \sum_{\iii \in K_{n+m}} \fii^t(A_\iii) &\le \sum_{\iii \in K_{n+m}} \fii^t(A_{\iii|_n})\fii^t(A_{\sigma^n(\iii)}) \\ &\le \sum_{\atop{\iii \in K_n}{\jjj \in K_m}} \fii^t(A_\iii)\fii^t(A_\jjj) = \sum_{\iii \in K_n} \fii^t(A_\iii) \sum_{\jjj \in K_m} \fii^t(A_\jjj)
\end{align*}
for all $t \ge 0$ and $n,m \in \N$. This observation allows us to give the following definition. Given a set $K \subset I^\infty$ with $\sigma(K) \subset K$ and $t \ge 0$, we define the \emph{topological pressure} to be
\begin{equation*}
  P_K(t) = \lim_{n \to \infty} \tfrac{1}{n} \log\sum_{\iii \in K_n} \fii^t(A_\iii).
\end{equation*}
The limit above exists by the standard theory of subadditive sequences. Furthermore, it holds that $P_K(t) = \inf_{n \in \N} \tfrac{1}{n} \log\sum_{\iii \in K_n} \fii^t(A_\iii)$. To simplify the notation, we let $P(t) = P_{I^\infty}(t)$.

\begin{lemma} \label{thm:P_convex}
  Suppose that for each $i \in I$ there is an invertible matrix $A_i \in \R^{d \times d}$. If $K \subset I^\infty$ is a nonempty set with $\sigma(K) \subset K$, then the function $P_K \colon [0,\infty) \to \R$ is continuous, and convex on the connected components of $[0,\infty) \setminus \{ 1,\ldots,d \}$. Furthermore, if $\| A_i \| < 1$ for all $i \in I$, then $P_K$ is strictly decreasing and there exists a unique $t \ge 0$ for which $P_K(t)=0$.
\end{lemma}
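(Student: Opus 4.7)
The plan is to establish each property first for the finite-$n$ approximants $a_n(t) := \tfrac{1}{n} \log \sum_{\iii \in K_n} \fii^t(A_\iii)$ and then pass to the limit, using $P_K(t) = \lim_n a_n(t)$.

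For continuity, I would apply \eqref{eq:cylinder1} termwise: for $\iii \in K_n$ and $\delta \ge 0$ one has $\lalpha^{\delta n}\fii^t(A_\iii) \le \fii^{t+\delta}(A_\iii) \le \ualpha^{\delta n}\fii^t(A_\iii)$. Summing over $\iii \in K_n$, taking $\tfrac{1}{n}\log$, and letting $n \to \infty$ yields
\[
  \delta \log \lalpha \;\le\; P_K(t+\delta) - P_K(t) \;\le\; \delta \log \ualpha,
\]
so $P_K$ is Lipschitz on $[0,\infty)$.

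For convexity, the key observation is that on each closed interval $[l,l+1]$ with $0 \le l \le d-1$ and on $[d,\infty)$ the function $t \mapsto \log \fii^t(A)$ is affine (with slope $\log\alpha_{l+1}(A)$ and $\tfrac{1}{d}\log|\det A|$ respectively). Thus each term $\fii^t(A_\iii)$ is log-linear on such an interval, and a finite positive sum of log-convex functions is log-convex by H\"older's inequality. Consequently $a_n$ is convex on each such interval, and the pointwise limit $P_K$ inherits convexity there. Since every connected component of $[0,\infty)\setminus\{1,\ldots,d\}$ is contained in one of these intervals, the desired claim follows.

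For the last assertion, assume $\|A_i\| < 1$ for all $i$ and set $\ualpha := \max_i \|A_i\| < 1$, so $\log \ualpha < 0$. The upper bound in the continuity estimate then gives $P_K(t+\delta) \le P_K(t) + \delta \log \ualpha < P_K(t)$ for every $\delta > 0$, so $P_K$ is strictly decreasing. Since $\fii^0 \equiv 1$ and $K_n$ is nonempty, $P_K(0) \ge 0$. For $t \ge d$, $\fii^t(A_\iii) = |\det A_\iii|^{t/d} \le \|A_\iii\|^t \le \ualpha^{t|\iii|}$, so $P_K(t) \le \log\kappa + t\log \ualpha \to -\infty$ as $t \to \infty$. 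Existence and uniqueness of the zero now follow from the intermediate value theorem and strict monotonicity.

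The main obstacle, in my view, is making the convexity argument watertight: the singular value function $\fii^t$ is only piecewise log-linear in $t$, and across each inner integer $l=1,\ldots,d-1$ the slope $\log \alpha_l(A)$ drops to $\log \alpha_{l+1}(A)$, producing a concave kink in $\log \fii^t(A_\iii)$. Log-convexity of the sum cannot be expected to persist across such kinks, and that is precisely why the claim is restricted to the connected components of $[0,\infty)\setminus\{1,\ldots,d\}$.
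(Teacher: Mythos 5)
Your proof is correct and follows essentially the same route as the paper: continuity and strict monotonicity come from the two-sided estimate \eqref{eq:cylinder1}, and convexity from applying H\"older's inequality to the finite sums $\sum_{\iii \in K_n}\fii^t(A_\iii)$ on each component and then passing to the limit. Your observation that $t \mapsto \log\fii^t(A)$ is affine on each component streamlines the paper's explicit computation \eqref{eq:P_convex} (and your appeal to the pointwise limit, rather than the infimum, of convex functions is the cleaner justification), but the mechanism is identical.
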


\begin{proof}
  If we let $\lalpha = \min_{i \in I} \alpha_d(A_i) > 0$ and $\ualpha = \max_{i \in I} \alpha_1(A_i)$, then \eqref{eq:cylinder1} implies
  \begin{equation*}
    -\delta \log\ualpha \le P_K(t) - P_K(t + \delta) \le -\delta \log\lalpha
  \end{equation*}
  for all $t,\delta \ge 0$. It follows that $P_K$ is continuous. Furthermore, if the matrices are contractive, then $-\delta \log\ualpha > 0$ and $P_K$ is strictly decreasing with $\lim_{t \to \infty} P_K(t) = -\infty$. Since $P_K(0) = \lim_{n \to \infty} \tfrac1n \log \# K_n \ge 0$, the last claim is immediate.

  To prove the claimed convexity, take $\gamma \in (0,1)$ and $0 \le t_1<t_2<d$. Letting $l_1 = \lfloor t_1 \rfloor$, $l_2 = \lfloor t_2 \rfloor$, and $l = \lfloor \gamma t_1 + (1-\gamma)t_2 \rfloor$, we have for each $\iii \in K_*$
  \begin{equation*}
    \alpha_1(A_\iii) \cdots \alpha_l(A_\iii)
    \alpha_{l+1}(A_\iii)^{t_1-l} = \fii^{t_1}(A_\iii)
    \frac{\alpha_{l_1+1}(A_\iii)^{l_1-t_1}\alpha_{l_1+1}(A_\iii)
          \cdots\alpha_l(A_\iii)}
         {\alpha_{l+1}(A_\iii)^{l-t_1}}
  \end{equation*}
  and
  \begin{equation*}
    \alpha_1(A_\iii) \cdots \alpha_l(A_\iii)
    \alpha_{l+1}(A_\iii)^{t_2-l} = \fii^{t_2}(A_\iii)
    \frac{\alpha_{l+1}(A_\iii)^{t_2-l}}
         {\alpha_{l+1}(A_\iii)\cdots\alpha_{l_2}(A_\iii)
          \alpha_{l_2+1}(A_\iii)^{t_2-l_2}}.
  \end{equation*}
  Hence
  \begin{equation} \label{eq:P_convex}
  \begin{split}
    \fii^{\gamma t_1 + (1-\gamma)t_2}(A_\iii)
    &= \bigl( \alpha_1(A_\iii) \cdots
    \alpha_l(A_\iii) \alpha_{l+1}(A_\iii)^{t_1-l} \bigr)^\gamma \\
    &\;\;\,\,\:\: \bigl( \alpha_1(A_\iii) \cdots
    \alpha_l(A_\iii) \alpha_{l+1}(A_\iii)^{t_2-l} \bigr)^{1-\gamma} \\
    &\le \biggl( \fii^{t_1}(A_\iii)
    \frac{\alpha_{l_1+1}(A_\iii)^{l-t_1}}{\alpha_{l+1}(A_\iii)^{l-t_1}}
    \biggr)^\gamma
    \biggl( \fii^{t_2}(A_\iii)
    \frac{\alpha_{l+1}(A_\iii)^{t_2-l}}{\alpha_{l_2+1}(A_\iii)^{t_2-l}}
    \biggr)^{1-\gamma}.
  \end{split}
  \end{equation}
  The convexity on the connected components of $[0,d] \setminus \{1,\ldots,d\}$ is now an immediate consequence of H\"older's inequality. Indeed, choosing $0 \le t_1<t_2<d$ such that $\lfloor t_1 \rfloor = \lfloor t_2 \rfloor$, we obtain from \eqref{eq:P_convex} that for each $n \in \N$
  \begin{equation*}
    \sum_{\iii \in K_n} \fii^{\gamma t_1 + (1-\gamma)t_2}(A_\iii) \le
    \biggl( \sum_{\iii \in K_n} \fii^{t_1}(A_\iii) \biggr)^\gamma
    \biggl( \sum_{\iii \in K_n} \fii^{t_2}(A_\iii) \biggr)^{1-\gamma}.
  \end{equation*}
  The claim follows since the infimum of a family of convex functions is convex. The convexity on $[d,\infty)$ follows by a similar reasoning.
\end{proof}

\begin{remark}
  Observe that \eqref{eq:P_convex} implies
  \begin{equation*}
    P_K\bigl( \gamma t_1 + (1-\gamma) t_2 \bigr) \le \gamma
    P_K(t_1) + (1-\gamma)P_K(t_2) + d\log(\ualpha/\lalpha)
  \end{equation*}
  for all $0 \le t_1 < t_2 < d$ and $\gamma \in (0,1)$. It follows from \cite[\S 2]{Cholewa1984} that there exists a convex function $\tilde{P}_K \colon [0,\infty) \to \R$ such that $|P_K(t) - \tilde{P}_K(t)| \le \tfrac{d}{2} \log(\ualpha/\lalpha)$ for all $0 \le t < d$.
\end{remark}

Suppose that for each $i \in I$ there is an invertible matrix $A_i \in \R^{d \times d}$ with $\|A_i\| \le \ualpha < 1$. For $a=(a_1,\ldots,a_\kappa) \in \R^{d\kappa}$, where $a_i \in \R^d$ is a translation vector and $\kappa = \# I$, we define a \emph{projection mapping} $\pi_a \colon I^\infty \to \R^d$ by setting
\begin{equation*}
  \pi_a(\iii) = \sum_{n=1}^\infty A_{\iii|_{n-1}}a_{i_n}
\end{equation*}
as $\iii = (i_1,i_2,\ldots)$. The mapping $\pi_a$ is clearly continuous. The collection of contractive affine mappings $\{ A_i + a_i \}_{i \in I}$ is called an \emph{affine iterated function system (affine IFS)}.

If $K \subset I^\infty$ is a nonempty compact set with $\sigma(K) \subset K$, then we define $E_K = \pi_a(K)$ and call this set \emph{sub-self-affine}. We also set $E = E_{I^\infty}$ and call this set \emph{self-affine}. The compact set $E_K$ satisfies
\begin{equation} \label{eq:subinvariant_set}
  E_K \subset \bigcup_{i \in I} (A_i+a_i)(E_K).
\end{equation}
This is an immediate consequence of the fact that
\begin{equation*}
  \pi_a(i\iii) = (A_i + a_i)\sum_{n=1}^\infty A_{\iii|_{n-1}}a_{i_n}
  = (A_i + a_i)\pi_a(\iii)
\end{equation*}
whenever $\iii \in K$ and $i \in K_1$. The converse is also true. Namely, if $E'$ is a compact set satisfying \eqref{eq:subinvariant_set}, then for the compact set $K = \bigcap_{n=0}^\infty \{ \iii \in I^\infty : \sigma^n(\iii) \in \pi_a^{-1}(E') \}$ we clearly have $\sigma(K) \subset K$ and $\pi_a(K) \subset E'$. To see that $E' \subset \pi_a(K)$, pick $x_0 \in E'$ and use \eqref{eq:subinvariant_set} repeatedly to discover a symbol $\iii = (i_1,i_2,\ldots) \in I^\infty$ such that for each $n \in \N$ there exists $x_n \in E'$ so that
\begin{equation*}
  x_0 = (A_{i_1}+a_{i_1}) \cdots (A_{i_n}+a_{i_n})(x_n) = A_{\iii|_n} x_n + A_{\iii|_{n-1}}a_{i_n} + \cdots + a_{i_1}.
\end{equation*}
Since $|A_{\iii|_n} x_n| \to 0$ as $n \to \infty$, we have $x_0=\pi_a(\iii)$. That $\iii \in K$ follows from the fact that $\pi_a\bigl( \sigma^n(\iii) \bigr) = x_n$ for all $n \in \N$. See also \cite[Proposition 2.1]{Falconer1995} and \cite[\S 3]{Bandt1989}.

Since the self-affine set $E$ satisfies \eqref{eq:subinvariant_set} with equality, it is also called an \emph{invariant set} or \emph{attractor} of the affine IFS $\{ A_i+a_i \}_{i \in I}$. It follows from \cite[\S 3.1]{Hutchinson1981} that $E$ is the only nonempty compact set satisfying such an equality. If there is no danger of misunderstanding, the image of a cylinder set $E_\iii = \pi_a([\iii])$ will also be called a cylinder set.

Recalling Lemma \ref{thm:P_convex}, we define the \emph{singularity dimension} to be the unique $t \ge 0$ for which $P_K(t)=0$. See also \cite[Proposition 4.1]{Falconer1988}, \cite[Proposition 3.2]{Falconer1995}, and \cite[\S 2]{KaenmakiShmerkin2009}. Inspecting the proof of \cite[Theorem 5.4]{Falconer1988}, we see that the singularity dimension serves as an upper bound for the upper Minkowski dimension of $E_K$.

\section{Equilibrium measures}

We denote the collection of all Borel probability measures on $I^\infty$ by $\MM(I^\infty)$. We set $\MM_\sigma(I^\infty) = \{ \mu \in \MM(I^\infty) : \mu \text{ is $\sigma$-invariant} \}$, where the \emph{$\sigma$-invariance of $\mu$} means that $\mu([\iii]) = \mu\bigl( \sigma^{-1}([\iii]) \bigr) = \sum_{i \in I} \mu([i\iii])$ for all $\iii \in I^*$. Observe that if $\mu \in \MM_\sigma(I^\infty)$, then $\mu(A)=\mu\bigl( \sigma^{-1}(A) \bigr)$ for all Borel sets $A \subset I^\infty$ by \cite[Theorem 5.4]{Bauer2001}. Furthermore, we set $\EE_\sigma(I^\infty) = \{ \mu \in \MM_\sigma(I^\infty) : \mu \text{ is ergodic} \}$, where the \emph{ergodicity of $\mu$} means that $\mu(A) = 0$ or $\mu(A) = 1$ for every Borel set $A \subset I^\infty$ with $A = \sigma^{-1}(A)$. Recall from \cite[Theorem 6.10]{Walters1982} that the set $\MM_\sigma(I^\infty)$ is compact and convex with ergodic measures as its extreme points.

For $K \subset I^\infty$ we also set $\MM_\sigma(K) = \{ \mu \in \MM_\sigma(I^\infty) : \spt(\mu) \subset K \}$ and $\EE_\sigma(K) = \{ \mu \in \EE_\sigma(I^\infty) : \spt(\mu) \subset K \}$. Here $\spt(\mu)$ is the support of $\mu$, that is, the smallest closed set $F \subset I^\infty$ for which $\mu(I^\infty \setminus F)=0$. Observe that if $K \subset I^\infty$ is a nonempty compact set, then $\MM_\sigma(K)$ is nonempty by \cite[Corollary 6.9.1]{Walters1982}. It is also compact and convex with $\EE_\sigma(K)$ as the set of its extreme points.

\begin{remark} \label{rem:erg_in_K}
  (1) Let $K \subset I^\infty$ be a nonempty compact set with $\sigma(K) \subset K$. If $\mu \in \EE_\sigma(I^\infty)$ satisfies $\mu(I^\infty \setminus K) > 0$, then the invariance of $\mu$ yields $\mu(K_0) = \mu(I^\infty \setminus K)$, where $K_0 = \bigcap_{n=1}^\infty \sigma^{-n}(I^\infty \setminus K) \subset I^\infty \setminus K$. Furthermore, since $\sigma^{-1}(K_0) = K_0$, the ergodicity of $\mu$ yields $\mu(I^\infty \setminus K) = 1$.

  (2) There exists a measure $\mu \in \EE_\sigma(I^\infty)$ and a nonempty compact set $K \subset I^\infty$ with $\sigma(K) \subset K$ so that $\mu(K)=0$. Namely, let $I = \{ 0,1 \}$ and define $\mu \in \MM(I^\infty)$ by setting $\mu(\{ (0,1,0,1,\ldots) \}) = \tfrac12 = \mu(\{ (1,0,1,0,\ldots) \})$. Since $\mu([\iii]) = \mu([0\iii]) + \mu([1\iii])$ for all $\iii \in I^*$, the measure $\mu$ is invariant. Furthermore, let $A \subset I^\infty$ be a Borel set with $A=\sigma^{-1}(A)$ and $0<\mu(A)<1$. It follows that $\mu(A)=\tfrac12$. If $(0,1,0,1,\ldots) \in A = \sigma^{-1}(A)$, then also $(1,0,1,0,\ldots) \in A$, which clearly cannot be the case. Similarly the other way around. Hence $\mu$ is ergodic. Choosing now $K=\{ (0,0,\ldots),(1,1,\ldots) \}$ (or $K=\{ \iii \in I^\infty : \sigma^{n-1}(\iii|_{n+1}) \ne (0,1) \text{ for all } n \in \N \}$), we have found the desired compact set.
\end{remark}

We define a concave function $H \colon [0,1] \to \R$ by setting $H(x) = -x\log x$ when $0 < x \le 1$ and $H(0)=0$. Notice that $0 \le H(x) \le \tfrac{1}{e}$ for all $x \in [0,1]$. The following lemma is an immediate consequence of \cite[Lemmas 2.3 and 2.2]{Kaenmaki2004}.

\begin{lemma} \label{thm:gen_sub_add}
  If $\mu \in \MM(I^\infty)$, then
  \begin{equation*}
    \tfrac1n \sum_{\iii \in I^n} H\bigl( \mu([\iii]) \bigr) \le \tfrac{1}{kn} \sum_{j=0}^{n-1} \sum_{\iii \in I^k} H\bigl( \mu \circ \sigma^{-j}([\iii]) \bigr) + \tfrac{3k}{n}\log\# I
  \end{equation*}
  for all integers $0<k<n$. In addition, if $t \ge 0$ and for each $i \in I$ there is an invertible matrix $A_i \in \R^{d \times d}$, then
  \begin{equation*}
    \tfrac1n \sum_{\iii \in I^n} \mu([\iii])\log\fii^t(A_\iii) \le \tfrac{1}{kn} \sum_{j=0}^{n-1} \sum_{\iii \in I^k} \mu \circ \sigma^{-j}([\iii])\log\fii^t(A_\iii) + \tfrac{3k}{n}\log\lalpha^{-t}
  \end{equation*}
  for all integers $0<k<n$. Here $\lalpha = \min_{i \in I} \alpha_d(A_i)>0$.
\end{lemma}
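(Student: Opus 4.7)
The plan is to prove both inequalities simultaneously via a block-decomposition argument in the spirit of the standard subadditivity tricks used to establish existence of entropy and pressure. Fix integers $0 < k < n$. For each offset $j \in \{0, 1, \ldots, k-1\}$, write $n = j + q(j)k + r(j)$ with $0 \le r(j) < k$, and slice each word $\iii \in I^n$ into a prefix $\iii|_j$ of length $j$, then $q(j)$ consecutive blocks $\sigma^{j+lk}\iii|_k$ of length $k$, and finally a suffix $\sigma^{j+q(j)k}\iii|_{r(j)}$ of length less than $k$. Letting $\PP_m$ denote the partition of $I^\infty$ into level-$m$ cylinders, this decomposition amounts to the partition identity
\begin{equation*}
  \PP_n = \PP_j \vee \sigma^{-j}\PP_k \vee \sigma^{-(j+k)}\PP_k \vee \cdots \vee \sigma^{-(j+(q(j)-1)k)}\PP_k \vee \sigma^{-(j+q(j)k)}\PP_{r(j)}.
\end{equation*}

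For the first estimate, writing $H_\mu(\alpha) := \sum_{A \in \alpha} H(\mu(A))$ for the entropy of a finite partition $\alpha$, I would invoke the subadditivity $H_\mu(\alpha \vee \beta) \le H_\mu(\alpha) + H_\mu(\beta)$ together with the trivial bound $H_\mu(\PP_m) \le m\log\#I$ (Jensen applied to the concave $H$) for the two boundary pieces, and the identity $H_\mu(\sigma^{-m}\PP_k) = \sum_{\kkk \in I^k} H(\mu \circ \sigma^{-m}([\kkk]))$ for each inner block. This yields, for every fixed $j$,
\begin{equation*}
  \sum_{\iii \in I^n} H(\mu([\iii])) \le 2k\log\#I + \sum_{l=0}^{q(j)-1} \sum_{\kkk \in I^k} H(\mu \circ \sigma^{-(j+lk)}([\kkk])).
\end{equation*}
Averaging over $j = 0, \ldots, k-1$, the combined block contribution reorganizes into $\sum_{m=0}^{n-k} \sum_\kkk H(\mu \circ \sigma^{-m}([\kkk]))$, because the map $(j,l) \mapsto j + lk$ bijects $\{(j,l) : 0 \le j < k,\ 0 \le l < q(j)\}$ onto $\{0, 1, \ldots, n-k\}$. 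Enlarging the range up to $n-1$ only enlarges the right-hand side, and dividing by $kn$ gives the first inequality.

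The pressure estimate proceeds identically, replacing entropy subadditivity by the submultiplicativity \eqref{eq:cylinder2} in logarithmic form, $\log\fii^t(A_{\iii\jjj}) \le \log\fii^t(A_\iii) + \log\fii^t(A_\jjj)$. Applied to the decomposition of $\iii \in I^n$, multiplied by $\mu([\iii]) \ge 0$, and summed over $\iii$, the same bijection arranges the block part into $\sum_{m=0}^{n-k}\sum_\kkk (\mu \circ \sigma^{-m})([\kkk])\log\fii^t(A_\kkk)$. The two boundary terms per offset are then absorbed via the uniform estimate $\bigl|\sum_\kkk \nu([\kkk])\log\fii^t(A_\kkk)\bigr| \le m\log\lalpha^{-t}$ valid for any probability measure $\nu$ on level-$m$ cylinders; this follows from $\fii^t(A_\kkk) \ge \alpha_d(A_\kkk)^t \ge \lalpha^{tm}$, which is implicit in \eqref{eq:cylinder1}, together with the standing contractivity $\fii^t(A_\kkk) \le 1$ inherited from Section 2.

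The main obstacle I anticipate is purely combinatorial bookkeeping: verifying that the shifts $\{j + lk : 0 \le j < k,\ 0 \le l < q(j)\}$ enumerate $\{0, 1, \ldots, n-k\}$ exactly once (this is what makes the averaging over $j$ produce the correct normalization $\tfrac{1}{kn}$), and collecting the at most two boundary corrections per offset, each of length at most $k-1$, into the stated single error term $\tfrac{3k}{n}\log\#I$, respectively $\tfrac{3k}{n}\log\lalpha^{-t}$ (the coefficient $3$ rather than $2$ being a conservative choice leaving room for the extension of the summation range from $\{0, \ldots, n-k\}$ to $\{0, \ldots, n-1\}$).
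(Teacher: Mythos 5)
Your argument is correct and is essentially the standard block-decomposition proof; the paper itself gives no proof of this lemma but simply cites Lemmas 2.2 and 2.3 of K\"aenm\"aki (2004), whose proofs run along exactly the lines you describe (slice length-$n$ words into length-$k$ blocks at each offset $j<k$, use subadditivity of partition entropy and submultiplicativity \eqref{eq:cylinder2}, average over offsets via the bijection $(j,l)\mapsto j+lk$ onto $\{0,\dots,n-k\}$, and absorb the boundary and range-extension errors into the $\tfrac{3k}{n}$ term). The only point worth flagging is that your bound on the boundary terms in the pressure estimate uses $\fii^t(A_\kkk)\le 1$, i.e.\ the standing contractivity $\|A_i\|\le\ualpha<1$; this is indeed needed (the stated error term $\tfrac{3k}{n}\log\lalpha^{-t}$ only makes sense in that regime), and it is consistent with the convention fixed at the start of Section 2, so your appeal to it is legitimate.
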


If $\mu \in \MM_\sigma(I^\infty)$, then we define the \emph{entropy} of $\mu$ by setting
\begin{equation*}
  h(\mu) = \lim_{n \to \infty} \tfrac{1}{n} \sum_{\iii \in I^n} H\bigl( \mu([\iii]) \bigr).
\end{equation*}
In addition, if for each $i \in I$ there is an invertible matrix $A_i \in \R^{d \times d}$, then for every $t \ge 0$ we define the \emph{$t$-energy} of $\mu$ by setting
\begin{equation*}
  \Lambda^t(\mu) = \lim_{n \to \infty} \tfrac{1}{n}\sum_{\iii \in I^n} \mu([\iii]) \log \fii^t(A_\iii).
\end{equation*}
The above limits exist since
\begin{align*}
  \limsup_{n \to \infty} \tfrac{1}{n} \sum_{\iii \in I^n} H\bigl( \mu([\iii]) \bigr) &\le \inf_{k \in \N} \tfrac{1}{k} \sum_{\iii \in I^k} H\bigl( \mu([\iii]) \bigr), \\
  \limsup_{n \to \infty} \tfrac{1}{n}\sum_{\iii \in I^n} \mu([\iii]) \log \fii^t(A_\iii) &\le \inf_{k \in \N} \tfrac{1}{k}\sum_{\iii \in I^k} \mu([\iii]) \log \fii^t(A_\iii)
\end{align*}
by the invariance of $\mu$ and Lemma \ref{thm:gen_sub_add}.

Suppose $K \subset I^\infty$ is a nonempty compact set and $\mu \in \MM(K)$. Since, trivially, $\mu([\iii])=0$ for all $I^* \setminus K_*$, we may write Lemma \ref{thm:gen_sub_add} and the definitions of entropy and $t$-energy by using $K_n$ instead of $I^n$. For our purposes the assumption $\mu \in \MM(K)$ is natural, see Remark \ref{rem:erg_in_K}. Observe that the mapping $\mu \mapsto \Lambda^t(\mu)$ defined on $\MM_\sigma(K)$ is the infimum of continuous and affine mappings. Since also the mapping $\mu \mapsto h(\mu)$ defined on $\MM_\sigma(K)$ is upper semicontinuous and affine (see \cite[Theorems 8.1 and 8.2]{Walters1982} or \cite[proof of Theorem 4.1]{Kaenmaki2004}), the mapping $\mu \mapsto h(\mu) + \Lambda^t(\mu)$ defined on $\MM_\sigma(K)$ is upper semicontinuous and affine for each $t \ge 0$. Finally, it is easy to see that $0 \le h_K(\mu) \le \log\# I$ and $t\log\lalpha \le \Lambda^t(\mu) \le t\log\ualpha$ for all $\mu \in \MM_\sigma(K)$ and $t \ge 0$, where $\lalpha = \min_{i \in I} \alpha_d(A_i) > 0$ and $\ualpha = \max_{i \in I} \alpha_1(A_i)$.

If $\mu \in \MM(I^\infty)$ and $t \ge 0$, then for each $n \in \N$ and $C_n \subset I^n$ Jensen's inequality implies
\begin{equation} \label{eq:jensen_appl}
\begin{split}
  \sum_{\iii \in C_n} &\mu([\iii])\biggl( -\log\mu([\iii]) + \log\fii^t(A_\iii) - \log\sum_{\jjj \in C_n} \fii^t(A_\jjj) \biggr) \\
  &= \sum_{\iii \in C_n} \beta(\iii) H\biggl( \frac{\mu([\iii])}{\beta(\iii)} \biggr) \le H\biggl( \sum_{\iii \in C_n} \beta(\iii) \frac{\mu([\iii])}{\beta(\iii)} \biggr) \in [0,\tfrac{1}{e}],
\end{split}
\end{equation}
where $\beta(\iii) = \fii^t(A_\iii) / \sum_{\jjj \in C_n} \fii^t(A_\jjj)$ for all $\iii \in C_n$. In particular, if $K \subset I^\infty$ is a nonempty compact set with $\sigma(K) \subset K$, then \eqref{eq:jensen_appl} applied with $C_n = K_n$ yields
\begin{equation*}
  P_K(t) \ge h(\mu) + \Lambda^t(\mu)
\end{equation*}
for all $\mu \in \MM_\sigma(K)$ and $t \ge 0$. A measure $\mu \in \MM_\sigma(K)$ is called a \emph{$(K,t)$-equilibrium measure} if
\begin{equation*}
  P_K(t) = h(\mu) + \Lambda^t(\mu).
\end{equation*}
To simplify the notation, we speak about $t$-equilibrium measures when $K=I^\infty$. From now on, without mentioning it explicitly, we assume that for each $i \in I$ there is an invertible matrix $A_i \in \R^{d \times d}$.

\begin{theorem}
  If $K \subset I^\infty$ is a nonempty compact set with $\sigma(K) \subset K$ and $t \ge 0$, then there exists an ergodic $(K,t)$-equilibrium measure.
\end{theorem}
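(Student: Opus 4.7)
The plan is a Gibbs-type approximation followed by an ergodic decomposition. For each $n \in \N$ define an approximating measure
\begin{equation*}
\nu_n = \sum_{\iii \in K_n} \frac{\fii^t(A_\iii)}{Z_n}\, \delta_{\tau(\iii)}, \qquad Z_n = \sum_{\jjj \in K_n} \fii^t(A_\jjj),
\end{equation*}
where $\tau(\iii)$ is any point of $[\iii] \cap K$ (nonempty because $\iii \in K_n$ extends to a symbol of $K$). Specialising \eqref{eq:jensen_appl} to $\mu = \nu_n$ with $C_n = K_n$ gives the Gibbs bound
\begin{equation*}
\frac{1}{n}\sum_{\iii \in K_n}\!\bigl[H(\nu_n([\iii])) + \nu_n([\iii])\log\fii^t(A_\iii)\bigr] \ge \frac{1}{n}\log Z_n - \frac{1}{en}.
\end{equation*}
Since $\nu_n$ need not be shift invariant, pass to the averages $\mu_n = \tfrac{1}{n}\sum_{j=0}^{n-1}\nu_n\circ\sigma^{-j}$ in the weak-$*$ compact set of Borel probability measures on $K$ and extract a subsequence $\mu_{n_\ell}\to\mu$. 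The telescoping identity $\mu_n - \mu_n\circ\sigma^{-1} = \tfrac{1}{n}(\nu_n - \nu_n\circ\sigma^{-n})$ shows $\mu \in \MM_\sigma(K)$.

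The key step is the lower bound $P_K(t) \le h(\mu) + \Lambda^t(\mu)$. Fix $k \in \N$ and apply Lemma \ref{thm:gen_sub_add} to $\nu_n$ (the statement is written for $I^n$, but $\nu_n$ is supported on $K$ so only indices in $K_n, K_k$ contribute). Summing the two displayed inequalities and using that $\mu_n([\iii]) = \tfrac{1}{n}\sum_{j=0}^{n-1}\nu_n\circ\sigma^{-j}([\iii])$ for $\iii \in K_k$ yields
\begin{equation*}
\frac{1}{n}\log Z_n - \frac{1}{en} \le \frac{1}{k}\sum_{\iii \in K_k}\!\bigl[H(\mu_n([\iii])) + \mu_n([\iii])\log\fii^t(A_\iii)\bigr] + \frac{3k}{n}\log(\#I \cdot \lalpha^{-t}).
\end{equation*}
Letting $n = n_\ell \to \infty$, weak-$*$ convergence (only finitely many cylinder indicators $\mathbf{1}_{[\iii]}$, $\iii \in K_k$, are tested, and these are continuous on $I^\infty$) and continuity of $H$ replace $\mu_n$ by $\mu$ on the right, while the left side tends to $P_K(t)$. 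Sending $k \to \infty$ and using the infimum-equals-limit characterisations of $h$ and $\Lambda^t$ recorded just after the definitions produces $P_K(t) \le h(\mu) + \Lambda^t(\mu)$. Combined with the reverse inequality already noted after \eqref{eq:jensen_appl}, $\mu$ is a $(K,t)$-equilibrium measure.

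To extract an \emph{ergodic} one, invoke the ergodic decomposition $\mu = \int \nu \, d\rho(\nu)$ over $\EE_\sigma(K)$. Entropy integrates affinely under this decomposition by standard ergodic theory. For the energy, inequality \eqref{eq:cylinder2} says that $n \mapsto \log\fii^t(A_{\iii|_n})$ is a subadditive cocycle over $\sigma$, so Kingman's subadditive ergodic theorem produces a shift-invariant function $F$ with $\int F \, d\nu = \Lambda^t(\nu)$ for every $\nu \in \MM_\sigma(K)$. Integrating $F$ against $\mu$ in two ways yields $\Lambda^t(\mu) = \int \Lambda^t(\nu) \, d\rho(\nu)$. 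Hence
\begin{equation*}
P_K(t) = h(\mu) + \Lambda^t(\mu) = \int_{\EE_\sigma(K)}\!\bigl[h(\nu) + \Lambda^t(\nu)\bigr] d\rho(\nu) \le P_K(t),
\end{equation*}
and since each integrand is at most $P_K(t)$, equality forces $h(\nu) + \Lambda^t(\nu) = P_K(t)$ for $\rho$-a.e.\ $\nu \in \EE_\sigma(K)$; any such $\nu$ is the required ergodic equilibrium measure.

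The principal obstacle is pushing the Gibbs bound through the weak-$*$ limit when $\Lambda^t$ is merely upper semicontinuous. Lemma \ref{thm:gen_sub_add} is precisely what resolves this by converting the level-$n$ estimate into a level-$k$ estimate tested on only finitely many continuous functions. A secondary subtlety is the affine behaviour of $\Lambda^t$ under the ergodic decomposition: as a decreasing limit of affine functionals $\Lambda^t$ is a priori only concave, and the integral identity is reached by identifying $\Lambda^t(\nu)$ with $\int F\, d\nu$ for a single shift-invariant $F$ furnished by Kingman's theorem applied to the subadditive cocycle of \eqref{eq:cylinder2}.
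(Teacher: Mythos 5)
Your proposal is correct and follows essentially the same route as the paper: the same weighted approximating measures $\nu_n$, the same Ces\`aro averaging and weak-$*$ limit, and the same use of Lemma \ref{thm:gen_sub_add} to push the level-$n$ Gibbs estimate down to a fixed level $k$ before taking limits. The only difference is cosmetic, in the ergodicity step: you justify the affine integration of $\Lambda^t$ over the ergodic decomposition via Kingman's theorem, whereas the paper invokes Choquet's theorem for the upper semicontinuous affine functional $\mu \mapsto h(\mu) + \Lambda^t(\mu)$ directly.
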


\begin{proof}
  To prove the theorem, we use an approach similar to that in \cite[Theorem 2.6]{Kaenmaki2004}. For each $n \in \N$ we define a Borel probability measure
  \begin{equation*}
    \nu_n = \frac{\sum_{\iii \in K_n} \fii^t(A_\iii) \delta_{\iii\hhh_\iii}}{\sum_{\iii \in K_n} \fii^t(A_\iii)},
  \end{equation*}
  where $\delta_{\iii\hhh_\iii}$ is the Dirac measure and for each $\iii \in K_n$ the symbol $\hhh_\iii \in I^\infty$ is chosen such that $\iii\hhh_\iii \in K$. Now with the measure $\nu_n \in \MM(K)$, we have an equality in \eqref{eq:jensen_appl} (when $C_n$ is chosen to be $K_n$). Furthermore, we set
  \begin{equation*}
    \mu_n = \tfrac{1}{n} \sum_{j=0}^{n-1} \nu_n \circ \sigma^{-j}
  \end{equation*}
  for all $n \in \N$. By going into a subsequence, if necessary, it follows that $\{ \mu_n \}_n$ converges weakly to some $\mu \in \MM_\sigma(K)$. According to Lemma \ref{thm:gen_sub_add} and the concavity of $H$, we have
  \begin{equation} \label{eq:eqm1}
    \tfrac{1}{n}\sum_{\iii \in K_n} H\bigl( \nu_n([\iii]) \bigr) \le \tfrac{1}{k}\sum_{\iii \in K_k} H\bigl( \mu_n([\iii]) \bigr) + \tfrac{3k}{n}\log\# I
  \end{equation}
  for all integers $0<k<n$. Letting $\lalpha = \min_{i \in I}\alpha_d(A_i)>0$, Lemma \ref{thm:gen_sub_add} also implies
  \begin{equation} \label{eq:eqm2}
    \tfrac{1}{n} \sum_{\iii \in K_n} \nu_n([\iii])\log\fii^t(A_\iii) \le \tfrac{1}{k} \sum_{\iii \in K_k} \mu_n([\iii])\log\fii^t(A_\iii) + \tfrac{3k}{n}\log\lalpha^{-t}
  \end{equation}
  for all integers $0<k<n$. Now \eqref{eq:jensen_appl} (with the equality), \eqref{eq:eqm1}, and \eqref{eq:eqm2} yield
  \begin{equation*}
    \tfrac{1}{n}\log\sum_{\iii \in K_n} \fii^t(A_\iii) \le \tfrac{1}{k}\sum_{\iii \in K_k} H\bigl( \mu_n([\iii]) \bigr) + \tfrac{1}{k} \sum_{\iii \in K_k} \mu_n([\iii])\log\fii^t(A_\iii) + \tfrac{3k}{n}\log\# I\lalpha^{-t}
  \end{equation*}
  for all integers $0<k<n$. Letting $n \to \infty$ along the chosen subsequence, we get
  \begin{equation*}
    P_K(t) \le \tfrac{1}{k}\sum_{\iii \in K_k} H\bigl( \mu([\iii]) \bigr) + \tfrac{1}{k} \sum_{\iii \in K_k} \mu([\iii])\log\fii^t(A_\iii).
  \end{equation*}
  Recall also \cite[Theorem 1.24]{Mattila1995} and the fact that cylinders are both open and closed. Letting $k \to \infty$, we have shown that $\mu$ is a $(K,t)$-equilibrium measure.

  Since the mapping $\mu \mapsto h(\mu) + \Lambda^t(\mu)$ defined on $\MM_\sigma(K)$ is upper semicontinuous and affine, the set of all $(K,t)$-equilibrium measures is compact and convex. Moreover, for a $(K,t)$-equilibrium measure $\mu$, by Choquet's Theorem (\cite[\S 3]{Phelps1966}), there exists a Borel probability measure $\tau_\mu$ on $\EE_\sigma(K)$ such that
  \begin{equation*} 
    h(\mu) + \Lambda^t(\mu) = \int_{\EE_\sigma(K)} \bigl( h(\eta) + \Lambda^t(\eta) \bigr) d\tau_\mu(\eta).
  \end{equation*}
  This implies the existence of an ergodic $(K,t)$-equilibrium measure. See also \cite[Theorem 4.1]{Kaenmaki2004}.
\end{proof}

If $K \subset I^\infty$ is a nonempty compact set with $\sigma(K) \subset K$, then $\mu \in \MM_\sigma(K)$ is called a \emph{$(K,t)$-semiconformal measure} if it satisfies the following Gibbs-type property: there exists a constant $c \ge 1$ such that
\begin{equation} \label{eq:semiconformal}
  c^{-1}e^{-|\iii|P_K(t)}\fii^t(A_\iii) \le \mu([\iii]) \le ce^{-|\iii|P_K(t)}\fii^t(A_\iii)
\end{equation}
for all $\iii \in K_*$. To simplify the notation, we speak about $t$-semiconformal measures when $K=I^\infty$. For the motivation of the term ``semiconformal'', the reader is referred to \cite[Lemma 1]{Falconer1992}. Since a $(K,t)$-semiconformal measure $\mu$ satisfies
\begin{equation*}
  h(\mu) + \Lambda^t(\mu) \ge \lim_{n \to \infty} \tfrac{1}{n}\sum_{\iii \in K_n} \mu([\iii])\bigl( -\log\mu([\iii]) + \log c^{-1}e^{nP_K(t)} \mu([\iii]) \bigr) = P_K(t),
\end{equation*}
a $(K,t)$-semiconformal measure is always a $(K,t)$-equilibrium measure.

We will discover that semiconformal measures may or may not exist: If for given $t \ge 0$ there exists a constant $D \ge 1$ such that
\begin{equation*}
  D^{-1}\fii^t(A_\iii)\fii^t(A_\jjj) \le \fii^t(A_{\iii\jjj})
\end{equation*}
for all $\iii, \jjj \in I^*$, then there exists an ergodic $t$-semiconformal measure. Take account of Remark \ref{rem:positive_condition}. Moreover, if a $t$-semiconformal measure is ergodic, then it is the only $t$-semiconformal measure. These facts follow from \cite[Theorem 2.2]{KaenmakiVilppolainen2008} by a minor modification. More precisely, in \cite{KaenmakiVilppolainen2008} it was assumed that the parameter $t$ is an exponent, but an examination of the proof reveals that this fact is not required. In Example \ref{ex:no_semiconformal}, it is shown that semiconformal measures do not always exist. Also, even if there exists a semiconformal measure, it is not necessarily ergodic, see Example \ref{ex:not_unique}.

\begin{remark} \label{rem:finite_measure}
 Suppose $K \subset I^\infty$ is a nonempty compact set with $\sigma(K) \subset K$. If a $(K,t)$-semiconformal measure exists when $P_K(t)=0$, then it is easy to see that $\HH^t(E_K) < \infty$. Here $\HH^t$ denotes the $t$-dimensional Hausdorff measure, see \cite[\S 4.3]{Mattila1995}. Indeed, it follows from \cite[proof of Proposition 5.1]{Falconer1988} that there exists a constant $c$, depending only on the dimension of the ambient space, such that
  \begin{equation*}
    \HH^t(E_K) \le c \liminf_{n \to \infty} \sum_{\iii \in K_n} \fii^t(A_\iii).
  \end{equation*}
  From this, the claim follows immediately. Moreover, if $P_K(t)=0$ for $0 \le t \le 1$, then the existence of a $(K,t)$-semiconformal measure implies $\PP^t(E_K) < \infty$. Here $\PP^t$ denotes the $t$-dimensional packing measure, see \cite[\S 5.10]{Mattila1995}. This fact follows from \cite[Proposition 2.2(d)]{Falconer1997}, since for every $x \in E_K$ and $0<r<\diam(E_K)$ 
  \begin{equation*}
    \frac{\mu \circ \pi_a^{-1}\bigl( B(x,r) \bigr)}{r^t} \ge
    \frac{\mu([\iii|_n])}{\diam(E_{\iii|_n})^t} \ge
    c\frac{\fii^t(A_{\iii|_n})}{\alpha_1(A_{\iii|_n})^t} = c > 0,
  \end{equation*}
  where $x = \pi_a(\iii)$ for some $\iii \in K$, $\mu$ is a $t$-semiconformal measure, and $n$ is the smallest integer for which $E_{\iii|_n} \subset B(x,r)$. It seems difficult to extend this to the case $t > 1$.
\end{remark}

The following two results give sufficient conditions for the uniqueness of the equilibrium measure. A self-affine set carrying two different ergodic equilibrium measures is given in Example \ref{ex:not_unique}.

\begin{lemma} \label{thm:equal_maps}
  Suppose that there is an invertible matrix $A \in \R^{d \times d}$ such that $A_i = A$ for all $i \in I$. Then for each $t \ge 0$, the uniform Bernoulli measure is the unique $t$-equilibrium measure.
\end{lemma}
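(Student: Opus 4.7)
The plan is to exploit the key observation that when all matrices coincide, the $t$-energy $\mu \mapsto \Lambda^t(\mu)$ becomes a constant function on $\MM_\sigma(I^\infty)$, so the equilibrium problem collapses to maximizing the entropy alone. The unique maximum-entropy measure on the full shift $I^\infty$ is classically the uniform Bernoulli measure.

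First, since $A_i = A$ for every $i \in I$, one has $A_\iii = A^{|\iii|}$, so $\fii^t(A_\iii)$ depends only on the length $|\iii|$. Using $\sum_{\iii \in I^n}\mu([\iii]) = 1$ for any $\mu \in \MM_\sigma(I^\infty)$, I would compute
\begin{equation*}
  \Lambda^t(\mu) = \lim_{n \to \infty}\tfrac{1}{n}\log\fii^t(A^n) =: \lambda,
\end{equation*}
the limit existing by the subadditivity of $n \mapsto \log\fii^t(A^n)$ guaranteed by \eqref{eq:cylinder2}. A parallel computation, noting $\#I^n = \kappa^n$, gives $P(t) = \log\kappa + \lambda$, so the equilibrium equation $P(t) = h(\mu) + \Lambda^t(\mu)$ reduces to the single requirement $h(\mu) = \log\kappa$.

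The remaining task is to show that $h(\mu) = \log\kappa$ forces $\mu$ to be the uniform Bernoulli measure. Combining the bound $h(\mu) \le \tfrac{1}{k}\sum_{\iii \in I^k} H(\mu([\iii]))$ (noted just after Lemma \ref{thm:gen_sub_add}) with Jensen's inequality for the concave function $H$,
\begin{equation*}
  \tfrac{1}{k}\sum_{\iii \in I^k} H(\mu([\iii])) \le \log\kappa,
\end{equation*}
in which equality holds if and only if $\mu([\iii]) = \kappa^{-k}$ for every $\iii \in I^k$, the assumption $h(\mu) = \log\kappa$ forces equality at every level $k$. Hence $\mu([\iii]) = \kappa^{-|\iii|}$ on every cylinder, which identifies $\mu$ uniquely with the uniform Bernoulli measure. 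A direct computation confirms that this measure indeed achieves $h(\mu) = \log\kappa$, so it is the unique $t$-equilibrium measure.

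There is no serious obstacle; the argument is formal once one spots that $\Lambda^t$ is constant on $\MM_\sigma(I^\infty)$. The only subtle point is the equality case in Jensen's inequality, which furnishes the uniqueness of the maximum-entropy measure.
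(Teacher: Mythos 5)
Your proposal is correct and follows essentially the same route as the paper: observe that $\fii^t(A_\iii)$ depends only on $|\iii|$, so $\Lambda^t$ is constant on $\MM_\sigma(I^\infty)$ and $P(t)=\log\kappa+\Lambda^t(\mu)$, reducing the problem to maximizing entropy, and then use Jensen's inequality with its equality case to identify the uniform Bernoulli measure as the unique maximizer. The handling of the equality case via $h(\mu)\le\tfrac1k\sum_{\iii\in I^k}H(\mu([\iii]))\le\log\kappa$ at every level $k$ is exactly the paper's argument.
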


\begin{proof}
  In this case, for a given $n \in \N$, the value of $\fii^t(A_\iii)$ is independent of the specific $\iii\in I^n$ chosen. Using this, one easily finds that
  \begin{equation*}
    P(t) = \log \kappa + \lim_{n \to \infty} \tfrac{1}{n}
    \log\fii^t(A^n),
  \end{equation*}
  and, for any $\mu\in \MM_\sigma(I^\infty)$,
  \begin{equation*}
    \Lambda^t(\mu) = \lim_{n \to \infty} \tfrac{1}{n} \log\fii^t(A^n).
  \end{equation*}
  Applying Jensen's inequality, we have
  \begin{equation*}
    \tfrac{1}{\kappa^n} \sum_{\iii \in I^n} H\bigl( \mu([\iii]) \bigr)
    \le H(\tfrac{1}{\kappa^n}) = \tfrac{n}{\kappa^n} \log\kappa
  \end{equation*}
  with equality if and only if $\mu([\iii]) = \tfrac{1}{\kappa^n}$ for every $\iii \in I^n$ whenever $n \in \N$. Hence $h(\mu) \le \log\kappa$ for every $\mu \in \MM_\sigma(I^\infty)$ and $h(\mu) = \log\kappa$ if and only if $\mu$ is the uniform Bernoulli measure. The proof is finished.
\end{proof}

We remark that generalizing the result of Lemma \ref{thm:equal_maps} for an arbitrary nonempty set $K \subset I^\infty$ satisfying $\sigma(K) \subset K$ seems to be difficult. The following theorem shows that if a $(K,t)$-semiconformal measure is ergodic, then it is the only $(K,t)$-equilibrium measure.

\begin{theorem} \label{thm:semiconformal_unique}
  Suppose that $K \subset I^\infty$ is a nonempty compact set with $\sigma(K) \subset K$, $t \ge 0$, and $c \ge 1$. If $\mu \in \MM_\sigma(K)$ satisfies $\mu([\iii]) \ge c^{-1}e^{-|\iii|P_K(t)}\fii^t(A_\iii)$ for all $\iii \in K_*$, then any $(K,t)$-equilibrium measure is absolutely continuous with respect to $\mu$. Moreover, if $\mu$ lies in the convex hull of a countable family of ergodic $(K,t)$-equilibrium measures, then the closure of the convex hull is precisely the set of all $(K,t)$-equilibrium measures. In particular, if $\mu$ is itself an ergodic $(K,t)$-equilibrium measure, then it is the only $(K,t)$-equilibrium measure.
\end{theorem}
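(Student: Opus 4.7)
The plan is to bound the Kullback--Leibler divergence
\[
  D_n(\nu\|\mu) := \sum_{\iii \in K_n} \nu([\iii]) \log \frac{\nu([\iii])}{\mu([\iii])}
\]
for an arbitrary $(K,t)$-equilibrium measure $\nu \in \MM_\sigma(K)$. Since $\nu$ is supported on $K$ we have $\sum_{\iii \in K_n}\nu([\iii])=1$, so substituting the hypothesis $\mu([\iii]) \ge c^{-1} e^{-|\iii| P_K(t)} \fii^t(A_\iii)$ yields
\[
  D_n(\nu\|\mu) \le -\sum_{\iii \in K_n} H\bigl(\nu([\iii])\bigr) + n P_K(t) - \sum_{\iii \in K_n} \nu([\iii]) \log \fii^t(A_\iii) + \log c.
\]
Lemma \ref{thm:gen_sub_add}, specialized to the $\sigma$-invariant measure $\nu$, forces the infimal characterizations $\frac{1}{n}\sum_{\iii \in K_n} H\bigl(\nu([\iii])\bigr) \ge h(\nu)$ and $\frac{1}{n}\sum_{\iii \in K_n} \nu([\iii])\log\fii^t(A_\iii) \ge \Lambda^t(\nu)$ for every $n$. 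Together with the equilibrium relation $h(\nu) + \Lambda^t(\nu) = P_K(t)$ this collapses the estimate to $D_n(\nu\|\mu) \le \log c$, uniformly in $n$.

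To convert this uniform bound into $\nu \ll \mu$, observe that the cylinder partitions of $K$ are nested and generate its Borel $\sigma$-algebra; monotonicity of KL divergence under partition refinement then gives $D(\nu\|\mu) = \sup_n D_n(\nu\|\mu) \le \log c < \infty$. The standard implication $D(\nu\|\mu) < \infty \Longrightarrow \nu \ll \mu$ settles the first assertion.

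For the second assertion, write $\mu = \sum_i \lambda_i \mu_i$ with pairwise distinct ergodic $(K,t)$-equilibrium measures $\mu_i$. Distinct ergodic $\sigma$-invariant measures are mutually singular, and a standard application of Birkhoff's theorem to a countable separating family in $C(I^\infty)$ produces pairwise disjoint, genuinely $\sigma$-invariant Borel sets $B_i$ with $\mu_i(B_i)=1$. The restrictions $\nu_i := \nu|_{B_i}$ are $\sigma$-invariant (immediate from $\sigma^{-1}B_i = B_i$ together with the $\sigma$-invariance of $\nu$) and absolutely continuous with respect to $\mu_i$, so ergodicity of $\mu_i$ forces $\nu_i = c_i\mu_i$ for constants $c_i \ge 0$ summing to one. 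Consequently $\nu = \sum_i c_i \mu_i$ lies in the closed convex hull of $\{\mu_i\}$, since the renormalized finite truncations converge to $\nu$ in total variation. The reverse inclusion is immediate: the set of $(K,t)$-equilibrium measures is compact and convex (upper semicontinuity and affineness of $h+\Lambda^t$ on $\MM_\sigma(K)$) and contains every $\mu_i$. The final ``in particular'' clause follows by applying the argument to the singleton family $\{\mu\}$. The principal technical obstacle is the first step: carefully pairing the Gibbs-type lower bound with the subadditivity estimates of Lemma \ref{thm:gen_sub_add} to collapse every $n$-dependent term into the single constant $\log c$.
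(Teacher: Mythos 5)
Your proof is correct, and both halves take a genuinely different route from the paper's. For the first claim the paper follows Bowen: assuming a Borel set $A$ with $\mu(A)=0<\nu(A)$, it covers $A$ efficiently by cylinders, coarsens the level-$n$ cylinder partition into the two blocks inside and outside the cover, and applies Jensen's inequality \eqref{eq:jensen_appl} on each block together with the Gibbs-type lower bound to reach the contradiction $0\le\tfrac12\nu(A)\log\mu(C_n')+\log c+\tfrac2e<0$. You isolate exactly the same quantitative input --- the equilibrium property plus the Gibbs lower bound force the level-$n$ relative entropy to be at most $\log c$, uniformly in $n$ (your use of the infimal characterizations of $h(\nu)$ and $\Lambda^t(\nu)$ is the right way to kill the $n$-dependence) --- but you then outsource the measure-theoretic conclusion to the Gelfand--Yaglom--Perez characterization of Kullback--Leibler divergence as a supremum over a generating sequence of partitions. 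This is cleaner and more conceptual, at the price of citing that standard theorem; the paper's two-block coarsening keeps everything self-contained. For the second claim the paper argues by contradiction: Choquet's theorem produces an ergodic equilibrium measure outside the closed convex hull, and mutual singularity of distinct ergodic measures then contradicts the absolute continuity from the first part. Your argument is constructive and in fact slightly stronger: via the Birkhoff basins $B_i$ you decompose an arbitrary equilibrium measure $\nu$ explicitly as $\sum_i c_i\mu_i$. The steps you leave implicit are all routine but worth recording --- one needs $\mu|_{B_i}=\lambda_i\mu_i$ (because $\mu_j(B_i)=0$ for $j\ne i$) to get $\nu|_{B_i}\ll\mu_i$, and $\sum_i c_i=\nu\bigl(\bigcup_i B_i\bigr)=1$ follows from $\mu\bigl(\bigcup_i B_i\bigr)=1$ together with $\nu\ll\mu$; one should also discard the indices with $\lambda_i=0$ before restricting.
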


\begin{proof}
  To prove the first claim, we follow some of the ideas of Bowen \cite{Bowen1975}. Assume to the contrary that there exists a Borel set $A \subset I^\infty$ such that $\mu(A)=0$ and $\nu(A)>0$, where $\nu$ is a $(K,t)$-equilibrium measure. Choose $0 < \eps < \exp\bigl( -2(\log c + \tfrac{2}{e})/\nu(A) \bigr)$. Now there exists a sequence of symbols $\{ \iii_k \}$ such that
  \begin{equation*}
    A \subset \bigcup_{k=1}^\infty [\iii_k] =: \tilde{C}
  \end{equation*}
  and $\mu(\tilde{C}) = \mu(\tilde{C} \setminus A) < \eps$. Since $\nu(\tilde{C}) \ge \nu(A) > 0$, there exists $N \in \N$ large enough such that $\nu(\tilde{C}_N) \ge \nu(A)/2$, where
  \begin{equation*}
    \tilde{C}_N = \bigcup_{k=1}^N [\iii_k].
  \end{equation*}
  Since we are now dealing with a finite union, we can rewrite $\tilde{C}_N$ as a union of cylinders of the same length $n$. Let us call the collection of these symbols $C_n$, and also set $C_n' = \tilde{C}_N$. Note that $\nu(C_n') \ge \nu(A)/2$ and $\mu(C_n') \le \mu(\tilde{C}) < \eps$. Let us also set $K_n' = \bigcup_{\iii \in K_n}[\iii]$.

  Using the fact that $\nu$ is a $(K,t)$-equilibrium measure, applying \eqref{eq:jensen_appl}, and recalling the assumption of the measure $\mu$, we have
  \begin{align*}
    nP_K(t) &\le -\sum_{\iii \in K_n} \nu([\iii])\log\nu([\iii]) +
    \sum_{\iii \in K_n} \nu([\iii])\log\fii^t(A_\iii) \\
    &= \sum_{\iii \in K_n \cap C_n} \nu([\iii])\bigl( -\log\nu([\iii]) +
    \log\fii^t(A_\iii) \bigr) \\ &\qquad\quad + \sum_{\iii \in K_n
      \setminus C_n} \nu([\iii])\bigl( -\log\nu([\iii]) +
    \log\fii^t(A_\iii) \bigr) \\
    &\le \nu(K_n' \cap C_n')\log\sum_{\iii \in K_n \cap C_n}\fii^t(A_\iii) +
    \nu(K_n' \setminus C_n')\log\sum_{\iii \in K_n \setminus
      C_n}\fii^t(A_\iii) + \tfrac{2}{e} \\
    &\le \nu(K \cap C_n') \log\mu(C_n') + \nu(K_n' \setminus C_n')
    \log\mu(K_n' \setminus C_n') + nP_K(t) + \log c + \tfrac{2}{e}.
  \end{align*}
  Since $\spt(\nu) \subset K$, this implies
  \begin{equation*}
    0 \le \tfrac12 \nu(A) \log\mu(C_n') + \log c + \tfrac{2}{e} < 0,
  \end{equation*}
  which is a contradiction.

  For the second claim, suppose that $\mu$ lies in the convex hull of $\{ \nu_i \}_{i=1}^\infty$, where each $\nu_i$ is an ergodic $(K,t)$-equilibrium measure. Denote the closure of the convex hull by $\CC$. Since the set of all $(K,t)$-equilibrium measures is compact and convex, all the measures in $\CC$ are $(K,t)$-equilibrium measures. Furthermore, if there exists a $(K,t)$-equilibrium measure $\tilde{\mu}$ not contained in $\CC$, then, by the same reason, there exists an ergodic $(K,t)$-equilibrium measure $\tilde{\nu}$ which is not in $\CC$. Since any two different ergodic measures are mutually singular, for each $i$ there is a Borel set $A_i$ such that $\nu_i(A_i)=0$ and $\tilde{\nu}(A_i)=1$. Letting $A=\bigcap_{i=1}^\infty A_i$, we see that $\mu(A)=0$ and $\tilde{\nu}(A)=1$. Therefore, $\tilde{\nu}$ is not absolutely continuous with respect to $\mu$, which is a contradiction.

  The last assertion is immediate, since in this case $\CC = \{ \mu \}$. This completes the proof.
\end{proof}

\section{Topological pressure} \label{sec:topo}

We will now look at the topological pressure in more detail. The existence of an equilibrium measure allows us to study the differentiability of the pressure. If the ergodic semiconformal measure exists, then we are able to determine the derivative. Suppose that for each $i \in I$ there is an invertible matrix $A_i \in \R^{d \times d}$ with $\| A_i \| \le \ualpha$. We set $\lalpha = \min_{i \in I} \alpha_d(A_i)>0$.

\begin{lemma} \label{thm:lyapunov}
  If $K \subset I^\infty$ is a nonempty compact set and $\mu \in \MM_\sigma(K)$, then there exist $0 > \log\ualpha \ge \lambda_1(\mu) \ge \cdots \ge \lambda_d(\mu) \ge \log\lalpha  > -\infty$ such that
  \begin{equation*}
    \lim_{n \to \infty} \tfrac1n \sum_{\iii \in K_n} \mu([\iii])
    \log\alpha_l(A_\iii) = \lambda_l(\mu)
  \end{equation*}
  whenever $l \in \{ 1,\ldots,d \}$. Moreover, if $\mu \in \EE_\sigma(K)$, then
  \begin{equation*}
    \lambda_l(\mu) =
    \lim_{n \to \infty} \tfrac1n \log\alpha_l(A_{\iii|_n})
  \end{equation*}
  for all $l \in \{ 1,\ldots,d \}$ and for $\mu$-almost every $\iii \in K$.
\end{lemma}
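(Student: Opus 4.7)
Write $\Phi_l(A) := \alpha_1(A)\cdots\alpha_l(A) = \fii^l(A)$, so that the submultiplicativity $\Phi_l(A_{\iii\jjj}) \le \Phi_l(A_\iii)\Phi_l(A_\jjj)$ is just the integer-exponent case of \eqref{eq:cylinder2}. The plan is to show first that, for each $l \in \{1,\ldots,d\}$, the sequence
\begin{equation*}
  a_n^{(l)} := \sum_{\iii \in K_n} \mu([\iii])\log\Phi_l(A_\iii)
\end{equation*}
is subadditive in $n$, apply Fekete's lemma to obtain $\Lambda^{(l)}(\mu) := \lim_n \tfrac{1}{n} a_n^{(l)}$, and then define $\lambda_l(\mu) := \Lambda^{(l)}(\mu) - \Lambda^{(l-1)}(\mu)$ with the convention $\Lambda^{(0)}(\mu) = 0$.

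To verify the subadditivity of $a_n^{(l)}$ I would mimic the chain of inequalities that precedes the definition of $P_K$: for $\iii \in K_{n+m}$ factor $A_\iii = A_{\iii|_n}A_{\sigma^n\iii}$ and apply submultiplicativity of $\Phi_l$. The sum $\sum_{\iii \in K_{n+m}} \mu([\iii])\log\Phi_l(A_{\iii|_n})$ collapses to $a_n^{(l)}$ because $\mu$ is concentrated on $K$, and $\sum_{\iii \in K_{n+m}} \mu([\iii])\log\Phi_l(A_{\sigma^n\iii})$ equals $a_m^{(l)}$ by the $\sigma$-invariance of $\mu$. Once $\Lambda^{(l)}(\mu)$ exists for each $l$, the identity $\log\alpha_l(A_\iii) = \log\Phi_l(A_\iii) - \log\Phi_{l-1}(A_\iii)$ immediately gives the stated limit formula with value $\lambda_l(\mu)$. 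The pointwise estimates $\lalpha^{|\iii|} \le \alpha_d(A_\iii) \le \alpha_l(A_\iii) \le \alpha_1(A_\iii) \le \ualpha^{|\iii|}$, together with the monotonicity $\alpha_l \ge \alpha_{l+1}$ of singular values, yield $0 > \log\ualpha \ge \lambda_1(\mu) \ge \cdots \ge \lambda_d(\mu) \ge \log\lalpha$ after averaging and passing to the limit.

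For the almost-sure statement when $\mu \in \EE_\sigma(K)$, I would apply Kingman's subadditive ergodic theorem to the cocycle $f_n^{(l)}(\iii) := \log\Phi_l(A_{\iii|_n})$, which satisfies $f_{n+m}^{(l)}(\iii) \le f_n^{(l)}(\iii) + f_m^{(l)}(\sigma^n\iii)$ and is bounded by $|\iii|$ times a constant thanks to the singular value estimates above. Ergodicity of $\mu$ forces the $\mu$-a.e.\ limit of $\tfrac{1}{n} f_n^{(l)}$ to be the constant $\Lambda^{(l)}(\mu) = \lim_n \tfrac{1}{n}\int f_n^{(l)} d\mu$. Taking the difference in $l$ gives $\lim_n \tfrac{1}{n} \log\alpha_l(A_{\iii|_n}) = \lambda_l(\mu)$ for $\mu$-almost every $\iii \in K$, as required.

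The main obstacle is the bookkeeping in the subadditivity step, in particular making sure that the restriction to $K_{n+m}$ interacts correctly with restriction to $K_n$ and with the shift; this is routine once the analogous derivation for the topological pressure is in hand. The rest of the argument is a standard combination of Fekete's lemma and Kingman's theorem.
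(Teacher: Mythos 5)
Your proposal is correct and follows essentially the same route as the paper: the paper likewise sets $\lambda_l(\mu) = \Lambda^l(\mu) - \Lambda^{l-1}(\mu)$ (the existence of the limits $\Lambda^l(\mu)$ having already been established from the subadditivity \eqref{eq:cylinder2} together with the invariance of $\mu$) and obtains the almost-sure statement from Kingman's subadditive ergodic theorem applied to $\iii \mapsto \log\fii^l(A_{\iii|_n})$. The only difference is that you re-derive the exact subadditivity of $n \mapsto \sum_{\iii \in K_n}\mu([\iii])\log\fii^l(A_\iii)$ directly via Fekete's lemma, whereas the paper invokes its earlier approximate-subadditivity Lemma \ref{thm:gen_sub_add}; both are valid.
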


\begin{proof}
  If $\mu \in \MM_\sigma(K)$, we define $\lambda_l(\mu) = \Lambda^l(\mu) - \Lambda^{l-1}(\mu)$ for $l \in \{ 1,\ldots,d \}$ to get the first claim. On the other hand, if $\mu \in \EE_\sigma(I^\infty)$, it follows from \eqref{eq:cylinder2} and Kingman's subadditive ergodic theorem \cite{Steele1989} that
  \begin{equation*}
    \lim_{n \to \infty} \tfrac1n \log\fii^t(A_{\iii|_n}) = \Lambda^t(\mu)
  \end{equation*}
  for $\mu$-almost every $\iii \in I^\infty$ whenever $t \ge 0$. The proof is finished.
\end{proof}

Suppose that $K \subset I^\infty$ is a nonempty set with $\sigma(K) \subset K$ and $t \in (0,\infty)$. The left and the right derivatives of the topological pressure at a point $t$ are defined by
\begin{align*}
  P_K'(t-) &= \lim_{\delta \uparrow 0} \frac{P_K(t+\delta)-P_K(t)}{\delta}, \\
  P_K'(t+) &= \lim_{\delta \downarrow 0} \frac{P_K(t+\delta)-P_K(t)}{\delta},
\end{align*}
respectively.

\begin{lemma} \label{thm:P_derivative}
  Suppose that $K \subset I^\infty$ is a nonempty compact set with $\sigma(K) \subset K$ and $t \in (0,d) \setminus \N$. If $\mu$ is a $(K,t)$-equilibrium measure, then
  \begin{equation*}
    P_K'(t-) \le \lambda_{l+1}(\mu) \le P_K'(t+),
  \end{equation*}
  where $l = \lfloor t \rfloor$.
  In fact, $P_K'(t) = \lambda_{l+1}(\mu)$ except for at most countably many points of $(0,d)$.
\end{lemma}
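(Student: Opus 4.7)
The plan is to exploit the variational characterization of the equilibrium measure together with the explicit piecewise form of $\fii^t$. Since $\mu$ is a $(K,t)$-equilibrium measure, we have $P_K(t) = h(\mu)+\Lambda^t(\mu)$, while the inequality $P_K(s) \ge h(\mu)+\Lambda^s(\mu)$, coming from \eqref{eq:jensen_appl} applied with $C_n = K_n$, holds for every $s \ge 0$. Subtracting gives
\begin{equation*}
  P_K(s) - P_K(t) \ge \Lambda^s(\mu) - \Lambda^t(\mu).
\end{equation*}

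First I would compute the right-hand side when $s$ is close enough to $t$ that $\lfloor s \rfloor = \lfloor t \rfloor = l$. Under this assumption the definition of $\fii^\cdot(A_\iii)$ gives the exact identity
\begin{equation*}
  \log\fii^s(A_\iii) - \log\fii^t(A_\iii) = (s-t)\log\alpha_{l+1}(A_\iii),
\end{equation*}
valid for every $\iii \in K_*$. Averaging against $\mu$ on level-$n$ cylinders, dividing by $n$, and using Lemma \ref{thm:lyapunov} to identify the limit of $\tfrac1n\sum_{\iii \in K_n}\mu([\iii])\log\alpha_{l+1}(A_\iii)$ as $\lambda_{l+1}(\mu)$, I obtain
\begin{equation*}
  \Lambda^s(\mu) - \Lambda^t(\mu) = (s-t)\,\lambda_{l+1}(\mu)
\end{equation*}
for every $s \in (l,l+1)$. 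Combined with the previous display this yields
\begin{equation*}
  P_K(s) - P_K(t) \ge (s-t)\,\lambda_{l+1}(\mu) \quad \text{for all } s \in (l,l+1).
\end{equation*}

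Since $t \in (l,l+1)$ (we are told $t$ is not an integer), I can divide by $s-t$ on both sides of $t$: the choice $s \downarrow t$ gives $P_K'(t+) \ge \lambda_{l+1}(\mu)$, and the choice $s \uparrow t$ (in which $s-t < 0$ reverses the inequality) gives $P_K'(t-) \le \lambda_{l+1}(\mu)$. Existence of the one-sided derivatives is guaranteed by the convexity of $P_K$ on the connected component $(l,l+1)$, established in Lemma \ref{thm:P_convex}. This proves the sandwich inequality.

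For the final assertion, I note that a convex function on an open interval is differentiable outside an at most countable set, so on each connected component of $(0,d) \setminus \{1,\ldots,d\}$ the pressure $P_K$ fails to be differentiable at most at countably many points; adding the finitely many integers in $(0,d)$ still leaves an at most countable exceptional set in $(0,d)$. At every other point, $P_K'(t-) = P_K'(t+) = P_K'(t)$, and the sandwich forces $P_K'(t) = \lambda_{l+1}(\mu)$, completing the proof. The only subtle step is the passage $\Lambda^s(\mu)-\Lambda^t(\mu) = (s-t)\lambda_{l+1}(\mu)$: it relies crucially on $\lfloor s \rfloor = \lfloor t \rfloor$, which is exactly why the result must exclude the integer points and why differentiability can genuinely fail there.
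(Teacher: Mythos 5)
Your argument is correct and is essentially the paper's proof: the paper likewise writes $P_K(t+\delta) \ge h(\mu)+\Lambda^{t+\delta}(\mu) = P_K(t)+\delta\lambda_{l+1}(\mu)$ for $\delta$ with $\lfloor t+\delta\rfloor = l$ (your identity $\Lambda^s(\mu)-\Lambda^t(\mu)=(s-t)\lambda_{l+1}(\mu)$ in the notation $\delta=s-t$, justified by Lemma \ref{thm:lyapunov}), and then invokes the convexity from Lemma \ref{thm:P_convex} for the at most countable exceptional set. Your write-up merely spells out the two-sided division by $s-t$ and the linearity of $s\mapsto\log\fii^s(A_\iii)$ on $(l,l+1)$ in more detail.
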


\begin{proof}
  Choosing $\delta \in \R$ so that $\lfloor t+\delta \rfloor = l$, we have
  \begin{align*}
    P_K(t+\delta) &\ge h(\mu) + \Lambda^{t+\delta}(\mu) = h(\mu) + \Lambda^t(\mu) + \delta\lambda_{l+1}(\mu) \\
    &= P_K(t) + \delta\lambda_{l+1}(\mu)
  \end{align*}
  by Lemma \ref{thm:lyapunov}. This gives the first claim. The second claim follows from Lemma \ref{thm:P_convex} by recalling some of the basic properties of convex functions, see, for example, \cite[Theorem 24.1]{Rockafellar1970} and \cite[Lemma 3.12 in \S 3]{SteinShakarchi2005}.
\end{proof}

\begin{remark}
  (1) If $P_K'(t)$ exists and $\mu_1,\mu_2$ are ergodic $(K,t)$-equilibrium measures, then $\lambda_{l+1}(\mu_1) = \lambda_{l+1}(\mu_2)$, where $l = \lfloor t \rfloor$.

  (2) If $K \subset I^\infty$ is a nonempty compact set with $\sigma(K) \subset K$ and $t \in [0,1)$, then there exists a $(K,t)$-equilibrium measure $\mu$ such that $P_K'(t+) = \lambda_1(\mu)$. To see this, choose a converging sequence $\{ \mu_n \}_n$, where each $\mu_n$ is a $(K,t_n)$-equilibrium measure and $t_n \downarrow t$. By the upper semicontinuity of the mapping $\mu \mapsto h(\mu) + \Lambda^t(\mu)$ defined on $\MM_\sigma(K)$, the limiting measure is a $(K,t)$-equilibrium measure. Since \eqref{eq:cylinder2} yields $\lambda_1(\mu) = \inf_{n \in \N} \tfrac{1}{n}\sum_{\iii \in K_n} \mu([\iii]) \log\alpha_{l+1}(A_\iii)$, the claim now follows by a simple calculation.

  (3) We can apply the idea used in the previous remark to show the following: If for every $t \ge 0$ there exists a constant $D \ge 1$ such that
  \begin{equation*}
    D^{-1}\fii^t(A_\iii)\fii^t(A_\jjj) \le \fii^t(A_{\iii\jjj})
  \end{equation*}
  for all $\iii, \jjj \in I^*$, then for each $t \in (0,d) \setminus \N$ we have $P'(t) = \lambda_{l+1}(\mu)$, where $l = \lfloor t \rfloor$ and $\mu$ is the ergodic $t$-semiconformal measure. Take account of Remark \ref{rem:positive_condition}. To see this, observe first that the mapping $\mu \mapsto \Lambda^t(\mu)$ defined on $\MM_\sigma(I^\infty)$ is also lower semicontinuous. Hence the mapping $\mu \mapsto \lambda_{l+1}(\mu)$ defined on $\MM_\sigma(I^\infty)$ is easily seen to be continuous. The claim now follows from the uniqueness of the equilibrium measure.
\end{remark}

In the following theorem we show that if the ergodic semiconformal measure exists, then we can determine the derivative of the topological pressure. In Example \ref{ex:nondifferentiable}, we present a nondifferentiable pressure.

\begin{theorem} \label{thm:derivative_exists_when_semiconf}
  Suppose $K \subset I^\infty$ is a nonempty compact set with $\sigma(K) \subset K$, $t \in (0,d) \setminus \N$, and $c \ge 1$. If $\mu \in \EE_\sigma(K)$ satisfies $\mu([\iii]) \ge c^{-1}e^{-|\iii|P_K(t)}\fii^t(A_\iii)$ for all $\iii \in K_*$, then
  \begin{equation*}
    P_K'(t) = \lambda_{l+1}(\mu),
  \end{equation*}
  where $l = \lfloor t \rfloor$.
\end{theorem}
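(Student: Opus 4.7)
The plan is to exploit the fact that Theorem~\ref{thm:semiconformal_unique} makes $\mu$ the unique $(K,t)$-equilibrium measure: the lower semiconformal bound forces $h(\mu) + \Lambda^t(\mu) = P_K(t)$ (by the computation that turned a semiconformal measure into an equilibrium measure earlier in the section), and then the ergodicity hypothesis combined with the theorem rules out any other $(K,t)$-equilibrium measure. For each $\delta \neq 0$ with $|\delta|$ small enough that $t+\delta$ still lies in the open interval $(l, l+1)$, I will pick some $(K, t+\delta)$-equilibrium measure $\mu_\delta$. The key observation is that $\log \fii^s(A) = \log\alpha_1(A) + \cdots + \log\alpha_l(A) + (s-l)\log\alpha_{l+1}(A)$ is affine in $s$ on $[l, l+1]$, so $s \mapsto \Lambda^s(\nu)$ is affine on this interval with slope $\lambda_{l+1}(\nu)$ for every $\nu \in \MM_\sigma(K)$.

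Combining this affine identity with the variational inequalities applied to both $\mu$ and $\mu_\delta$ at both parameters $t$ and $t+\delta$ will yield the sandwich
\begin{equation*}
  \lambda_{l+1}(\mu) \le \frac{P_K(t+\delta) - P_K(t)}{\delta} \le \lambda_{l+1}(\mu_\delta) \qquad (\delta > 0),
\end{equation*}
and the reverse chain of inequalities for $\delta < 0$. To turn this into a statement about $P_K'(t)$, I must control $\lambda_{l+1}(\mu_\delta)$ in the limit $\delta \to 0$. First I will show $\mu_\delta \to \mu$ weakly: since $|\Lambda^{t+\delta}(\nu) - \Lambda^t(\nu)| = |\delta||\lambda_{l+1}(\nu)|$ is bounded by $|\delta|\max\{|\log\lalpha|, |\log\ualpha|\}$ uniformly in $\nu$, continuity of $P_K$ (Lemma~\ref{thm:P_convex}) forces $h(\mu_\delta) + \Lambda^t(\mu_\delta) \to P_K(t)$; any weak-limit point of $\{\mu_\delta\}$ is therefore a $(K,t)$-equilibrium measure by upper semicontinuity of $h + \Lambda^t$, hence equals $\mu$ by uniqueness. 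Upper semicontinuity of $h$ and $\Lambda^t$ separately then promotes this into $h(\mu_\delta) \to h(\mu)$ and $\Lambda^t(\mu_\delta) \to \Lambda^t(\mu)$ (since if both limsups are strict, their sum cannot hit $P_K(t)$), so in particular $\Lambda^{t+\delta}(\mu_\delta) \to \Lambda^t(\mu)$ as well.

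The main obstacle is that $\lambda_{l+1} = \Lambda^{l+1} - \Lambda^l$ is a difference of upper semicontinuous functions on $\MM_\sigma(K)$, so continuity of $\lambda_{l+1}$ at $\mu$ is not automatic. To circumvent this, I will read off $\lambda_{l+1}(\mu_\delta)$ from the affine identity in two different ways. For $\delta > 0$, write $\lambda_{l+1}(\mu_\delta) = (\Lambda^{l+1}(\mu_\delta) - \Lambda^{t+\delta}(\mu_\delta))/(l+1-t-\delta)$; upper semicontinuity of $\Lambda^{l+1}$ together with the already established limit $\Lambda^{t+\delta}(\mu_\delta) \to \Lambda^t(\mu)$ gives $\limsup_{\delta \downarrow 0} \lambda_{l+1}(\mu_\delta) \le (\Lambda^{l+1}(\mu) - \Lambda^t(\mu))/(l+1-t) = \lambda_{l+1}(\mu)$. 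For $\delta < 0$, write $\lambda_{l+1}(\mu_\delta) = (\Lambda^{t+\delta}(\mu_\delta) - \Lambda^l(\mu_\delta))/(t+\delta-l)$; upper semicontinuity of $\Lambda^l$ analogously forces $\liminf_{\delta \uparrow 0} \lambda_{l+1}(\mu_\delta) \ge \lambda_{l+1}(\mu)$. Feeding these one-sided bounds back into the sandwiches squeezes both $P_K'(t+)$ and $P_K'(t-)$ to $\lambda_{l+1}(\mu)$, and the theorem follows.
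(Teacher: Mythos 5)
Your overall architecture --- reduce to uniqueness of the $(K,t)$-equilibrium measure, sandwich the difference quotients of $P_K$ between $\lambda_{l+1}(\mu)$ and $\lambda_{l+1}(\mu_\delta)$, and recover the one-sided limits of $\lambda_{l+1}(\mu_\delta)$ from the affineness of $s \mapsto \Lambda^s$ on $[l,l+1]$ together with upper semicontinuity of $\Lambda^{l+1}$ and $\Lambda^l$ --- is sound, and it is genuinely different from the proof in the paper. The paper follows Heurteaux instead: it perturbs the parameter by $\delta/n$, uses the lower Gibbs bound to show that the sets $C_n' = \bigcup \{ [\iii] : \iii \in K_n, \ \alpha_{l+1}(A_{\iii|_n}) > e^{n\beta} \}$ have $\mu$-measure bounded away from $0$ (respectively away from $1$), and invokes Kingman's theorem via Lemma \ref{thm:lyapunov} to obtain $P_K'(t+) \le \lambda_{l+1}(\mu) \le P_K'(t-)$ directly; that route never needs uniqueness, nor even that $\mu$ is an equilibrium measure.

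There is, however, a genuine gap at your very first step. You assert that the lower bound $\mu([\iii]) \ge c^{-1}e^{-|\iii|P_K(t)}\fii^t(A_\iii)$ forces $h(\mu) + \Lambda^t(\mu) = P_K(t)$ ``by the computation that turned a semiconformal measure into an equilibrium measure.'' That computation rests on the inequality $\log\fii^t(A_\iii) \ge \log\bigl( c^{-1}e^{nP_K(t)}\mu([\iii]) \bigr)$, i.e.\ on the \emph{upper} Gibbs bound $\mu([\iii]) \le c e^{-nP_K(t)}\fii^t(A_\iii)$, which is not among your hypotheses; with only the lower bound the same manipulation merely reproduces the trivial direction $h(\mu) + \Lambda^t(\mu) \le P_K(t)$. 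Everything downstream --- the left half of your sandwich and the uniqueness needed to get $\mu_\delta \to \mu$ --- depends on this step. The claim is nevertheless true and can be repaired using results already in the paper: the existence theorem for ergodic equilibrium measures in Section 3 provides an ergodic $(K,t)$-equilibrium measure $\nu$; the first claim of Theorem \ref{thm:semiconformal_unique}, whose hypothesis is exactly your lower bound, gives $\nu \ll \mu$; and since two distinct ergodic $\sigma$-invariant measures are mutually singular, $\nu \ll \mu$ forces $\nu = \mu$. Hence $\mu$ is the unique ergodic $(K,t)$-equilibrium measure, and with that substitution the rest of your argument goes through.
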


\begin{proof}
  To prove the theorem, we use methods similar to those in Heurteaux \cite{Heurteaux1998}. Let us first show that $\lambda_{l+1}(\mu) \ge P_K'(t+)$. Take $\beta < P_K'(t+) < 0$, choose $\delta > \log c/\bigl( P_K'(t+)-\beta \bigr)$, and set $\eps = c^{-1}e^{P_K'(t+)\delta}-e^{\beta\delta} > 0$. Since $\mu$ is ergodic, Lemma \ref{thm:lyapunov} implies that it suffices to find a set $C \subset I^\infty$ with $\mu(C)>0$ so that
  \begin{equation*}
    \lim_{n \to \infty} \tfrac1n \log\alpha_{l+1}(A_{\iii|_n}) \ge \beta
  \end{equation*}
  for all $\iii \in C$.

  Recall that $P_K(s) \le \tfrac{1}{n} \log\sum_{\iii \in K_n} \fii^s(A_\iii)$ for all $s \ge 0$ and $n \in \N$. Choose $n_0 \in \N$ so that $\lfloor t+\delta/n \rfloor = l$ for every $n \ge n_0$. Now Lemma \ref{thm:P_convex} implies
  \begin{equation} \label{eq:kalkyyli}
    \sum_{\iii \in K_n} \fii^{t+\delta/n}(A_\iii) \ge e^{nP_K(t+\delta/n)} \ge e^{P_K'(t+)\delta + nP_K(t)}
  \end{equation}
  for all $n \ge n_0$. Letting $C_n = \{ \iii \in K_n : \alpha_{l+1}(A_{\iii|_n}) > e^{n\beta} \}$ and $C_n' = \bigcup_{\iii \in C_n} [\iii]$, it follows from \eqref{eq:kalkyyli} and \eqref{eq:cylinder1} that for every $n \ge n_0$ we have
  \begin{align*}
    e^{P_K'(t+)\delta} e^{nP_K(t)} &\le \sum_{\iii \in K_n} \fii^{t+\delta/n}(A_\iii) \le \sum_{\iii \in K_n \setminus C_n} \fii^t(A_\iii)\alpha_{l+1}(A_\iii)^{\delta/n} + \sum_{\iii \in C_n} \fii^t(A_\iii) \\
    &\le ce^{\beta\delta} e^{nP_K(t)}\bigl( 1-\mu(C_n') \bigr) + ce^{nP_K(t)} \mu(C_n').
  \end{align*}
  Hence
  \begin{equation*}
    \mu(C_n') \ge \frac{c^{-1}e^{P_K'(t+)\delta}-e^{\beta\delta}}{1-e^{\beta\delta}} \ge \eps
  \end{equation*}
  for all $n \ge n_0$, and, consequently, $\mu(C) \ge \eps$ for $C = \bigcap_{n=1}^\infty\bigcup_{k=n}^\infty C_k'$. Since for every $\iii \in C$ and for all $n \in \N$ there is $k \ge n$ so that $\tfrac1k \log\alpha_{l+1}(A_{\iii|_k}) > \beta$, we have proven the first claim.

  Let us next show that $\lambda_{l+1}(\mu) \le P_K'(t-)$. Take $\log\lalpha \le P_K'(t-) < \beta < 0$, choose $\delta > \log c/\bigl( \beta - P_K'(t-) \bigr)$, and set $\eps = (c^{-1}e^{-P_K'(t-)\delta} - e^{-\beta\delta})/(e^{-\delta\log\lalpha}-e^{-\beta\delta}) > 0$. Let $n_0 \in \N$ be such that $\lfloor t-\delta/n \rfloor = l$ for every $n \ge n_0$. As in the proof of the first claim, it follows that
  \begin{align*}
    e^{-P_K'(t-)\delta} e^{nP_K(t)} &\le \sum_{\iii \in K_n} \fii^{t-\delta/n}(A_\iii) \le \sum_{\iii \in C_n}\fii^t(A_\iii) \alpha_{l+1}(A_\iii)^{-\delta/n} + \lalpha^{-\delta} \sum_{\iii \in K_n \setminus C_n} \fii^t(A_\iii) \\
    &\le ce^{-\beta\delta}e^{nP_K(t)}\bigl( 1-\mu(K \setminus C_n') \bigr) + ce^{-\delta\log\lalpha} e^{nP_K(t)}\mu(K \setminus C_n'),
  \end{align*}
  where $C_n = \{ \iii \in K_n : \alpha_{l+1}(A_{\iii|_n}) > e^{n\beta} \}$ and $C_n' = \bigcup_{\iii \in C_n} [\iii]$. Hence
  \begin{equation*}
    \mu(K \setminus C_n') \ge \frac{c^{-1}e^{-P_K'(t-)\delta}-e^{-\beta\delta}}{e^{-\delta\log\lalpha}-e^{-\beta\delta}} \ge \eps
  \end{equation*}
  for all $n \ge n_0$. The second claim now follows as in the proof of the first claim. The proof is finished.
\end{proof}

\section{Dimension results}

Suppose $K \subset I^\infty$ is a nonempty compact set with $\sigma(K) \subset K$. If for each $i \in I$ there are an invertible matrix $A_i \in \R^{d \times d}$ with $||A_i|| \le \ualpha < 1$ and a translation vector $a_i \in \R^d$, then both the affine IFS $\{ A_i + a_i \}_{i \in I}$ and the sub-self-affine set $E_K \subset \R^d$ are called \emph{tractable} provided that $A_i\bigl( \overline{\QQ}_d \bigr) \subset \QQ_d \cup \{ 0 \}$ for all $i \in I$ and $E_K$ is not contained in any hyperplane of $\R^d$. See \cite[Appendix A]{KaenmakiShmerkin2009} and Example \ref{ex:tractable} for details. Here $\QQ_d$ is the collection of all vectors $v \in \R^d$ with strictly positive coefficients and $\overline{\QQ}_d$ its closure.

Given a nonempty compact set $K \subset I^\infty$ with $\sigma(K) \subset K$ and a tractable affine IFS, define for $r > 0$
\begin{equation*}
  Z(r) = \bigl\{ \iii \in K_* : \diam\bigl( E_\iii \bigr) \le r <
  \diam\bigl( E_{\iii^-}) \bigr) \bigr\},
\end{equation*}
and, if in addition, $x \in E_K$, set
\begin{equation*}
  Z(x,r) = \{ \iii \in Z(r) : E_\iii \cap B(x,r) \ne \emptyset \}.
\end{equation*}
We say that a tractable sub-self-affine set $E_K$ satisfies the \emph{ball condition} if there exists a constant $0<\delta<1$ such that for each $x \in E_K$ there is $r_0>0$ such that for every $0<r<r_0$ there exists a set $\{ x_\iii \in \conv\bigl(E_\iii\bigr) : \iii \in Z(x,r) \}$ such that the collection  $\{ B(x_\iii,\delta r) : \iii \in Z(x,r) \}$ is disjoint. By $\conv(A)$ we mean the convex hull of a given set $A$. Inspecting the proof of \cite[Theorem 3.5]{KaenmakiVilppolainen2008}, we see that the ball condition is equivalent to $\sup_{x \in E_K} \limsup_{r \downarrow 0} \# Z(x,r) < \infty$.

By $\HH^t$ we mean the $t$-dimensional Hausdorff measure, see \cite[\S 4.3]{Mattila1995}. The $n$-dimensional Lebesgue measure is denoted by $\LL^n$. Finally, the Hausdorff and the upper Minkowski dimensions are denoted by $\dimh$ and $\dimm$, respectively. Consult \cite[\S 4.8 and \S 5.3]{Mattila1995} and \cite[\S 10.1]{Falconer1997}.

\begin{theorem} \label{thm:tractable}
  Suppose $K \subset I^\infty$ is a nonempty compact set with $\sigma(K) \subset K$ and $E_K$ is a tractable sub-self-affine set satisfying the ball condition. If $P_K(s)=0$ for some $0<s\le 1$, then $\dimh(E_K)=\dimm(E_K)=s$.
\end{theorem}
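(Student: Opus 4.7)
The upper bound $\dimm(E_K) \le s$ is already recorded in the paper: as noted at the end of Section~2, the zero of the topological pressure majorises the upper Minkowski dimension of $E_K$ by an inspection of the proof of \cite[Theorem~5.4]{Falconer1988}. Hence the content of the theorem is the reverse inequality $\dimh(E_K) \ge s$.

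To obtain this lower bound, I plan to push forward an ergodic $(K,s)$-equilibrium measure $\nu \in \EE_\sigma(K)$ (whose existence is supplied by the equilibrium theorem of Section~3) to a Frostman-type measure on $E_K$. The hypothesis $s \le 1$ is essential: it reduces the singular value function to $\fii^s(A_\iii) = \|A_\iii\|^s = \alpha_1(A_\iii)^s$, so that $\Lambda^s(\nu) = s\lambda_1(\nu)$, and the equilibrium identity $h(\nu) + \Lambda^s(\nu) = P_K(s) = 0$ collapses to the ``conformal'' relation $h(\nu) = -s\lambda_1(\nu)$. Tractability in turn ensures that $\diam(E_\iii) \asymp \|A_\iii\|$ uniformly in $\iii \in K_*$, so that the symbolic quantities governed by the pressure translate directly into metric diameters in $\R^d$.

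The argument then proceeds in the Billingsley--McMillan style. The Shannon--McMillan--Breiman theorem applied to $\nu$ and Lemma~\ref{thm:lyapunov} yield, for $\nu$-almost every $\iii \in K$,
\begin{equation*}
  \nu([\iii|_n]) = e^{-n(h(\nu)+o(1))} \qquad \text{and} \qquad \|A_{\iii|_n}\| = e^{n(\lambda_1(\nu)+o(1))},
\end{equation*}
so $\nu([\iii|_n])$ and $\|A_{\iii|_n}\|^s$ share the same exponential rate of decay. Writing $\mu = (\pi_a)_* \nu$, the ball condition supplies a universal bound $\# Z(\pi_a(\iii),r) \le M$, while tractability gives $\diam(E_\jjj) \asymp r$ for every $\jjj \in Z(r)$. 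Combining these ingredients with the covering inequality
\begin{equation*}
  \mu\bigl( B(\pi_a(\iii),r) \bigr) \le \sum_{\jjj \in Z(\pi_a(\iii),r)} \nu([\jjj])
\end{equation*}
is intended to yield a Frostman-type estimate $\mu(B(x,r)) \le r^{s-o(1)}$ for $\mu$-almost every $x$ and all small $r$, whence the mass distribution principle \cite[\S 8]{Mattila1995} gives $\dimh(E_K) \ge \dimh \mu \ge s$.

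The principal technical obstacle lies in this last step: the Shannon--McMillan estimate only controls $\nu([\iii|_n])$ along the symbol $\iii$ itself, whereas the sum over $Z(\pi_a(\iii),r)$ may also involve cylinders $\jjj$ that are geometrically close to $\pi_a(\iii)$ without being prefixes of $\iii$, and for such $\jjj$ the almost-sure bound does not apply directly. I would handle this via an Egorov-type uniformisation: pass to a subset $B_\eps \subset K$ with $\nu(B_\eps) \ge 1-\eps$ on which the Shannon--McMillan and Kingman convergences are simultaneously uniform, then exploit the cap $\# Z(\pi_a(\iii),r) \le M$ together with $\sigma$-invariance of $\nu$ to argue that the atypical $\jjj \in Z(\pi_a(\iii),r)$ contribute negligibly on the logarithmic scale that governs the Hausdorff dimension. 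This is exactly the point where the ball condition (not merely the open set condition) is doing genuine work, by reducing the local analysis of $\mu$ to a bounded-multiplicity cover.
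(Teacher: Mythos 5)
Your plan is essentially the paper's proof: push forward an ergodic equilibrium measure, use Egorov together with the Shannon--McMillan--Breiman/Kingman asymptotics to obtain a Frostman-type estimate on cylinders, convert $\alpha_1(A_\iii)$ into $\diam(E_\iii)$ via tractability, and finish with the bounded multiplicity supplied by the ball condition and the mass distribution principle. The one step you flag as the ``principal technical obstacle'' has a cleaner resolution than the one you sketch: rather than invoking $\sigma$-invariance to argue that atypical cylinders contribute negligibly, simply restrict the measure to the Egorov set $C$. Then for every $\iii \in K_*$ either $[\iii] \cap C = \emptyset$, so $\mu|_C([\iii]) = 0$, or $\iii$ is a prefix of some point of $C$ and the uniform estimate applies; hence $\mu|_C([\iii]) \le c\,\diam(E_\iii)^t$ for \emph{all} $\iii \in K_*$, and the sum over $Z(x,r)$ is controlled with no exceptional terms. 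The paper also avoids your $o(1)$ bookkeeping at the critical exponent by working with an ergodic $(K,t)$-equilibrium measure for $t<s$: since $P_K(t)>0$, the local dimension ratio $\log\mu([\iii|_n])/\log\alpha_1(A_{\iii|_n})^t$ tends to $1-P_K(t)/(t\lambda_1(\mu))>1$ almost surely, so Egorov yields the clean inequality $\mu([\iii|_n])<\alpha_1(A_{\iii|_n})^t$ for large $n$, giving $\HH^t(E_K)>0$ and then $\dimh(E_K)\ge s$ by letting $t \uparrow s$. Either variant works, but yours needs the restriction trick spelled out to be complete.
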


\begin{proof}
  Choose $0<t<s$ and let $\mu$ be an ergodic $(K,t)$-equilibrium measure. Since $\lambda_1(\mu) \le \log\ualpha < 0 < P(t)$, it follows from \cite[Proposition 4.2]{Kaenmaki2004} that
  \begin{equation*}
    \lim_{n \to \infty}\frac{\log\mu([\iii|_n])}{\log\alpha_1(A_{\iii|_n})^t} = 1 - \frac{P(t)}{t\lambda_1(\mu)} > 1
  \end{equation*}
  for $\mu$-almost all $\iii \in K$. Now, using Egorov's Theorem, we find a compact set $C \subset K$ with $\mu(C) \ge \tfrac12$ and $n_0 \in \N$ so that $\mu([\iii|_n]) < \alpha_1(A_{\iii|_n})^t$ for all $\iii \in C$ and $n \ge n_0$. Hence, recalling \cite[Lemma A.3]{KaenmakiShmerkin2009}, there are constants $c',c \ge 1$ such that
  \begin{equation*}
    \mu|_C([\iii]) \le c'\alpha_1(A_\iii)^t \le c\diam(E_\iii)^t
  \end{equation*}
  for all $\iii \in K_*$. Since the ball condition implies $\sup_{x \in E_K} \limsup_{r \downarrow 0} \# Z(x,r) < \infty$, there exists $M>0$ such that for each $x \in E_K$ there is $r_0>0$ so that for every $0<r<r_0$ we have $\# Z(x,r) < M$ and
  \begin{equation*}
    \mu|_C \circ \pi_a^{-1}\bigl( B(x,r) \bigr) \le \sum_{\iii \in Z(x,r)} \mu([\iii]) \le c\sum_{\iii \in Z(x,r)} \diam(E_\iii)^t \le cMr^t.
  \end{equation*}
  It follows now from \cite[Proposition 2.2(a)]{Falconer1997} that $\HH^t(E_K)>0$. The proof is finished since $0<t<s$ was arbitrary and $s$ serves as an upper bound for the upper Minkowski dimension.
\end{proof}

The following theorem generalizes \cite[Theorem 5.3]{Falconer1988} and \cite[Theorem 4.5]{Kaenmaki2004} to sub-self-affine sets. We remark that the claim $\dimh(E_K) = \min\{ s,d \}$ in the theorem could alternatively be proved by modifying the proof of \cite[Theorem 5.3]{Falconer1988} to the setting of sub-self-affine sets. However, taking advantage of the existence of an equilibrium measure permits a stronger statement with a simple proof.

\begin{theorem} \label{thm:ssa_ae_dim}
  Suppose for each $i \in I$ there is an invertible matrix $A_i \in \R^{d \times d}$ with $\|A_i\| < \tfrac12$. If $K \subset I^\infty$ is a nonempty compact set with $\sigma(K) \subset K$ and $P_K(s)=0$, then for $\LL^{d\kappa}$-almost every choice of $a=(a_1,\ldots,a_\kappa) \in \R^{d\kappa}$, where $a_i \in \R^d$ is a translation vector and $\kappa = \# I$, we have
  \begin{equation*}
    \dimh(E_K) = \dimh(\mu \circ \pi_a^{-1}) = \min\{ s,d \},
  \end{equation*}
  where $E_K = \pi_a(K)$ and $\mu$ is an ergodic $(K,s)$-equilibrium measure.
\end{theorem}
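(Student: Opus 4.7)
The plan is to split the identity $\dimh(E_K) = \dimh(\mu \circ \pi_a^{-1}) = \min\{s,d\}$ into matching upper and lower bounds. For the upper bound, the remark at the end of Section~2 already gives $\dimm(E_K) \le s$, so $\dimh(E_K) \le \dimm(E_K) \le s$; combined with the trivial $\dimh(E_K) \le d$ this yields $\dimh(E_K) \le \min\{s,d\}$. Since $\spt(\mu) \subset K$ implies $\spt(\mu \circ \pi_a^{-1}) \subset E_K$, we have $\dimh(\mu \circ \pi_a^{-1}) \le \dimh(E_K)$, so the entire content of the theorem is the reverse inequality $\dimh(\mu \circ \pi_a^{-1}) \ge \min\{s,d\}$ for $\LL^{d\kappa}$-almost every $a$.

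For this I would follow the potential-theoretic strategy of Falconer \cite{Falconer1988} and K\"aenm\"aki \cite{Kaenmaki2004}. Fix any non-integer $t \in (0,\min\{s,d\})$; by the standard energy characterisation of Hausdorff dimension (see \cite[\S 8]{Mattila1995}), it suffices to show that
\[
  \int_B I_t(\mu \circ \pi_a^{-1})\, da < \infty
\]
for a bounded cube $B \subset \R^{d\kappa}$, where $I_t(\nu) = \iint |x-y|^{-t}\,d\nu(x)\,d\nu(y)$. Approximating $\min\{s,d\}$ from below through a countable sequence of non-integer $t$'s then yields the desired lower bound.

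Apply Fubini and the Falconer-type transversality estimate, which under $\|A_i\| < \tfrac{1}{2}$ gives
\[
  \int_B |\pi_a(\iii) - \pi_a(\jjj)|^{-t}\, da \le C\, \fii^t(A_{\iii \wedge \jjj})^{-1}
\]
for all distinct $\iii,\jjj \in K$, where $\iii \wedge \jjj$ is the longest common prefix (cf.\ \cite[Lemma~2.2]{Falconer1988}; the factorisation $\pi_a(\iii) - \pi_a(\jjj) = A_{\iii \wedge \jjj}(\pi_a(\sigma^k\iii) - \pi_a(\sigma^k\jjj))$ with $k = |\iii \wedge \jjj|$ explains where the singular value function arises, and the hypothesis $\|A_i\|<\tfrac12$ keeps the remaining factor transverse in~$a$). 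Decomposing the $\mu \otimes \mu$ integral by $k = |\iii \wedge \jjj|$ and using $\sum_{\hhh \in K_k} \mu([\hhh])^2 g(\hhh) = \int \mu([\iii|_k]) g(\iii|_k)\, d\mu(\iii)$ reduces matters to
\[
  \sum_{k=0}^\infty \int \mu([\iii|_k])\, \fii^t(A_{\iii|_k})^{-1}\, d\mu(\iii) < \infty.
\]
By Shannon--McMillan--Breiman, Lemma~\ref{thm:lyapunov}, and the equilibrium identity $h(\mu) + \Lambda^s(\mu) = P_K(s) = 0$, the integrand at level $k$ tends $\mu$-a.e.\ to $\exp\bigl(k(\Lambda^s(\mu) - \Lambda^t(\mu))\bigr)$, and this exponent is strictly negative since every Lyapunov exponent satisfies $\lambda_l(\mu) \le \log\ualpha < 0$ and $s > t$ force $\Lambda^s(\mu) < \Lambda^t(\mu)$.

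The main obstacle will be promoting these pointwise convergences into a summable upper bound. Outside an Egorov-type set of large $\mu$-measure the integrand is dominated by $\exp\bigl(k(\Lambda^s(\mu) - \Lambda^t(\mu) + \eps)\bigr)$, which sums geometrically; but on the ``slowly-converging'' complement the only crude bound available is $\mu([\iii|_k])\fii^t(A_{\iii|_k})^{-1} \le \lalpha^{-tk}$, and naive Egorov does not force the measure of that set to decay fast enough to absorb $\lalpha^{-tk}$. The remedy is a large-deviations strengthening of Shannon--McMillan--Breiman and Kingman's theorem (as used in the proof of \cite[Theorem~4.5]{Kaenmaki2004}), which yields exponential decay of the exceptional-set measure and closes the estimate. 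Once this is in place, the geometric series converges, $\int_B I_t(\mu \circ \pi_a^{-1})\, da < \infty$, and Fubini delivers the theorem.
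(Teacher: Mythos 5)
Your outline is sound and, at bottom, it is the same mathematics as the paper's, but the paper takes a much shorter route: it simply invokes \cite[Theorem 1.9]{JordanPollicottSimon2007}, which states that for \emph{any} ergodic $\mu$ on $K$ one has $\dimh(\mu\circ\pi_a^{-1})=\min\{\diml(\mu),d\}$ for $\LL^{d\kappa}$-almost every $a$ under $\|A_i\|<\tfrac12$, and then checks by a one-line computation (using $h(\mu)+\Lambda^s(\mu)=P_K(s)=0$) that the Lyapunov dimension of an ergodic $(K,s)$-equilibrium measure equals $s$; the upper bound comes from the Minkowski-dimension remark exactly as you say. What you are doing is re-proving the lower-bound half of that cited theorem via Falconer--Solomyak transversality and energy integrals (note the $\|A_i\|<\tfrac12$ threshold is Solomyak's refinement, not Falconer's original $\tfrac13$), which is legitimate but duplicates work the paper outsources. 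One substantive comment on your ``main obstacle'': you do not need a large-deviations strengthening of Shannon--McMillan--Breiman or Kingman. The standard device is to restrict the measure rather than to estimate the bad set: by Egorov choose $C\subset K$ with $\mu(C)>1-\eps$ on which $\tfrac1k\log\mu([\iii|_k])\to -h(\mu)$ and $\tfrac1k\log\fii^t(A_{\iii|_k})\to\Lambda^t(\mu)$ uniformly, and replace $\mu$ by $\nu=\mu|_C$ throughout the energy estimate. Since $\nu([\hhh])\le\mu([\hhh])$ and $\nu$ charges only cylinders meeting $C$, every term in $\sum_k\sum_{\hhh\in K_k}\nu([\hhh])^2\fii^t(A_\hhh)^{-1}$ for $k$ large is bounded by $e^{-k(h(\mu)+\Lambda^t(\mu)-2\eps')}$ times $\nu(K)$, and $h(\mu)+\Lambda^t(\mu)=\Lambda^t(\mu)-\Lambda^s(\mu)>0$; the bad set simply never enters. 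Finite $t$-energy of $\nu\circ\pi_a^{-1}$ already gives $\dimh(E_K)\ge t$, and exhausting $K$ by a countable family of such sets $C_m$ with $\mu(C_m)\uparrow 1$ (intersecting the corresponding full-measure sets of parameters $a$) upgrades this to $\dimh(\mu\circ\pi_a^{-1})\ge t$. With that simplification your argument closes; as written, the appeal to an unproved large-deviations estimate is the one soft spot.
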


\begin{proof}
  If $\mu \in \EE_\sigma(K)$ and $\diml(\mu) = l - \bigl( h(\mu) + \Lambda^l(\mu) \bigr)/\lambda_{l+1}(\mu)$, where $l = \max\{ k \in \N : 0 < h(\mu) + \Lambda^k(\mu) \}$, then \cite[Theorem 1.9]{JordanPollicottSimon2007} implies that $\dimh(\mu \circ \pi_a^{-1}) = \min\{ \diml(\mu),d \}$ for $\LL^{d\kappa}$-almost all $a \in \R^{d\kappa}$. A simple calculation shows that if $\mu$ is a $(K,s)$-equilibrium measure, then $\diml(\mu)=s$ if and only if $P_K(s)=0$. The existence of an ergodic $(K,s)$-equilibrium measure finishes the proof.
\end{proof}

Falconer \cite{Falconer1995} asked if $\dimh(E)=\dimm(E)$ for all sub-self-similar sets $E$. Since the singularity dimension is an upper bound for the upper Minkowski dimension, Theorem \ref{thm:ssa_ae_dim} gives a partial positive answer to this question. The question remains open for sub-self-similar sets which do not satisfy the open set condition and are constructed by using exceptional (in the sense of Theorem \ref{thm:ssa_ae_dim}) translation vectors.

\section{Remarks and examples}

In this last section, we give the examples mentioned in the previous sections. We begin by recalling a geometric condition implying the uniqueness of the equilibrium measure.

\begin{remark} \label{rem:positive_condition}
  Suppose that for each $i \in I$ there is a contractive invertible matrix $A_i \in \R^{2 \times 2}$ such that the following geometric condition is satisfied: There exist $\theta \in S^1$ and $0 < \beta < \pi/2$ such that
  \begin{align*}
    A_i\bigl( \overline{X(\theta,\beta)} \bigr) &\subset
    X(\theta,\beta), \\
    A_i^*\bigl( \overline{X(\theta,\beta)} \bigr) &\subset
    X(\theta,\beta)
  \end{align*}
  for all $i \in I$. Here $A^*$ denotes the transpose of a given matrix $A$, $\overline{B}$ the closure of a given set $B$, and $X(\theta,\beta) = \{ x \in \R^2 : \cos(\beta/2) < |\theta \cdot x| / |x|, \; x \ne 0 \} \cup \{ 0 \}$. Then it follows from \cite[Lemma 4.1]{KaenmakiShmerkin2009} that there exists a constant $D \ge 1$ such that for every $0 \le t \le 2$
  \begin{equation*}
    D^{-1}\fii^t(A_\iii)\fii^t(A_\jjj) \le \fii^t(A_{\iii\jjj})
  \end{equation*}
  whenever $\iii,\jjj \in I^*$. Consequently, there exists an ergodic $t$-semiconformal measure. Theorem \ref{thm:semiconformal_unique} implies that it is the only $t$-equilibrium measure.
\end{remark}

We calculate the topological pressure for $0 \le t \le 1$ when the $2 \times 2$ matrices are diagonal. Let $I = \{ 0,1 \}$ and
\begin{equation*}
  A_0 =
  \begin{pmatrix}
    \beta & 0 \\
    0 & \gamma
  \end{pmatrix}, \quad
  A_1 =
  \begin{pmatrix}
    \lambda & 0 \\
    0 & \theta
  \end{pmatrix},
\end{equation*}
where $0 < \beta,\gamma,\lambda,\theta < 1$. Suppose $\iii \in I^n$ has $k$ zeros and $n-k$ ones. Notice that $A_\iii$ is a diagonal matrix with diagonal elements, which are also its singular values, $\beta^k \lambda^{n-k}$ and $\gamma^k \theta^{n-k}$. Hence, if $0 \le t \le 1$, then
\begin{equation*}
  \fii^t(A_\iii) = \max\{ \beta^k\lambda^{n-k},\gamma^k\theta^{n-k} \}^t.
\end{equation*}
Therefore,
\begin{align*}
  \max\{ (\beta^t+\lambda^t&)^n,(\gamma^t+\theta^t)^n \} \le
  \sum_{k=0}^n \binom{n}{k} \max\{ \beta^k\lambda^{n-k},
  \gamma^k\theta^{n-k} \}^t \\
  &= \sum_{\iii \in I^n} \fii^t(A_\iii)
  \le \sum_{k=0}^n \binom{n}{k}
  (\beta^{tk}\lambda^{t(n-k)} + \gamma^{tk}\theta^{t(n-k)}) \\
  &= (\beta^t+\lambda^t)^n + (\gamma^t+\theta^t)^n
  \le 2\max\{ (\beta^t+\lambda^t)^n,(\gamma^t+\theta^t)^n \},
\end{align*}
and consequently,
\begin{equation} \label{eq:diag_topo}
  P(t)=\max\{ \log(\beta^t + \lambda^t), \log(\gamma^t + \theta^t) \}
\end{equation}
for $0 \le t \le 1$.

\begin{example} \label{ex:not_unique}
  We exhibit a self-affine set $E$ having exactly two different ergodic $s$-equilibrium measures and, as a consequence, infinitely many $s$-equilibrium measures, where $P(s)=0$. We also show that any convex combination of these two ergodic $s$-equilibrium measures is $s$-semiconformal and $\PP^s(E)<\infty$.

  Let $I = \{ 0,1 \}$ and
  \begin{equation*}
    A_0 =
    \begin{pmatrix}
      \lambda & 0 \\
      0 & \gamma
    \end{pmatrix}, \quad
    A_1 =
    \begin{pmatrix}
      \gamma & 0 \\
      0 & \lambda
    \end{pmatrix}
  \end{equation*}
  where $0<\gamma<\lambda\le\tfrac12$. Now the mappings of the affine IFS $\{ A_0, A_1 + (1-\gamma,1-\lambda) \}$ map the unit square as illustrated in Figure \ref{fig:not_unique}.
  \begin{figure}
    \begin{center}
      \includegraphics[scale=0.6]{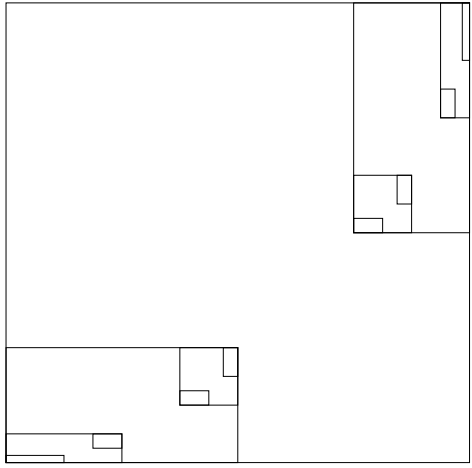}
    \end{center}
    \caption{An illustration for Example \ref{ex:not_unique} when
      $\lambda=\tfrac12$ and $\gamma=\tfrac14$.}
    \label{fig:not_unique}
  \end{figure}
  Let $\pi \colon I^\infty \to \R^2$ be the projection mapping associated to this affine IFS and $E=\pi(I^\infty)$ the corresponding self-affine set. Let us also set $E_x = \proj_x(E)$ and $E_y = \proj_y(E)$, where $\proj_x$ and $\proj_y$ are orthogonal projections onto the $x$-axis and $y$-axis, respectively. We choose $0 < s < 1$ to be the unique number for which $\lambda^s + \gamma^s = 1$. It is easy to see that $E_x$ and $E_y$ are self-similar sets and $s = \dimh(E_x) = \dimh(E_y) \le \dimh(E) \le 1$. It follows from \eqref{eq:diag_topo} that $P(t)=\log(\lambda^t+\gamma^t)$, yielding $\dimh(E) \le s$. Therefore $\dimh(E)=s$.

  Let $\mu$ and $\nu$ be the ergodic Bernoulli measures on $I^\infty$ obtained from probability vectors $(\lambda^s,\gamma^s)$ and $(\gamma^s,\lambda^s)$, respectively. Consult, for example, \cite[Theorem 2.2]{KaenmakiVilppolainen2008}. It follows from standard arguments that $\mu \circ (\proj_x \pi)^{-1}$ has full dimension on $E_x$ and $\nu \circ (\proj_x \pi)^{-1}$ has full dimension on $E_y$. Hence $\dimh(\mu \circ \pi^{-1}) = \dimh(\nu \circ \pi^{-1}) = \dimh(E) = s$. Using \cite[Theorem 1.9]{JordanPollicottSimon2007}, we get $s = -h(\mu)/\lambda_1(\mu) = -h(\nu)/\lambda_1(\nu)$, and thus $h(\mu) + \Lambda^s(\mu) = h(\mu) + s\lambda_1(\mu) = 0 = P(s) = h(\nu) + \Lambda^s(\nu)$ yielding that both the measures $\mu$ and $\nu$ are $s$-equilibrium measures. See also \cite{Rams2005}.

  The convexity of the set of all $s$-equilibrium measures implies that any convex combination of the measures $\mu$ and $\nu$ is an $s$-equilibrium measure. We claim that any such a measure $\eta$ is an $s$-semiconformal measure. Since $P(s)=0$, we have to check there exists a constant $c \ge 1$ such that
  \begin{equation*}
    c^{-1} \fii^s(A_\iii) \le \eta([\iii]) \le c \fii^s(A_\iii)
  \end{equation*}
  for all $\iii\in I^*$. But this follows immediately from the fact that
  \begin{equation*}
    \fii^s(A_{\iii}) = \max\{ \lambda^k \gamma^{n-k},
    \lambda^{n-k} \gamma^k \}^s.
  \end{equation*}
  We can now apply Theorem \ref{thm:semiconformal_unique} to conclude that the set of $s$-equilibrium measures is the convex hull of $\{\mu,\nu\}$ and, in particular, that $\mu$ and $\nu$ are the only ergodic $s$-equilibrium measures.

  That $\PP^s(E)<\infty$ follows now from Remark \ref{rem:finite_measure}.
\end{example}

\begin{question}
  Is the number of different ergodic $t$-equilibrium measures on a self-affine set always finite? In a forthcoming paper \cite{FengKaenmaki2009}, we show that in $\R^2$ a self-affine set can have at most two different ergodic $t$-equilibrium measures.
\end{question}

\begin{example} \label{ex:no_semiconformal}
  In this example, we show that semiconformal measures do not always exist. We also exhibit a self-affine set $E$ for which $\HH^s(E) = \infty$, where $s$ is the singularity dimension.

  Let $I=\{0,1\}$ and set
  \begin{equation*}
    A_0 = A_1 = \lambda
    \begin{pmatrix}
      1 & 1 \\
      0 & 1
    \end{pmatrix},
  \end{equation*}
  where $0<\lambda<1/2$ is fixed. Observe that the equilibrium measure is unique by Lemma \ref{thm:equal_maps}. Let us show by a direct calculation that there cannot be an $s$-semiconformal measure when $P(s)=0$. It is straightforward to see that if $\iii \in I^n$, then
  \begin{equation*}
    A_\iii = \lambda^n
    \begin{pmatrix}
      1 & n \\
      0 & 1
    \end{pmatrix},
  \end{equation*}
  and, consequently, $\alpha_1(A_\iii)^2 = n^2\lambda^{2n} \bigl( 1/n^2 + 1/2 + (1/n^2 + 1/4)^{1/2} \bigr)$. This, in turn, shows that
  \begin{equation*}
    (n \lambda^n)^t \le \fii^t(A_\iii) \le 2 (n \lambda^n)^t
  \end{equation*}
  whenever $0 < t < 1$. We deduce that $P(t) = \log(2\lambda^t)$ for $0<t<1$, yielding the singular value dimension to be $s=\log 2/\log(1/\lambda)$. Suppose $\mu$ is an $s$-semiconformal measure, let $c$ be as in \eqref{eq:semiconformal}, and pick an integer $n > c^{1/s}$. Then
  \begin{equation*}
    1 = \sum_{\iii\in I^n} \mu([\iii]) \ge c^{-1} \sum_{\iii\in I^n}
    \fii^s(A_\iii) \ge c^{-1} 2^n (n\lambda^n)^s = c^{-1}n^s,
  \end{equation*}
  giving the desired contradiction.

  Let us consider the affine IFS $\{ A_0,A_1+(1,1) \}$, and denote the invariant set by $E$. In what follows, we will show that $\HH^s(E)=\infty$. Let $P(x,y) = x-y$. This is a Lipschitz map, so it is enough to show that $\HH^s\bigl( P(E) \bigr) = \infty$. Note that
  \begin{equation*}
    P\bigl( \pi(\iii) \bigr) = \sum_{k=0}^\infty k\lambda^k i_{k+1}
  \end{equation*}
  for any $\iii = (i_1,i_2,\ldots) \in I^\infty$. In particular, if $\iii \in I^n$, then
  \begin{equation*}
    \diam\bigl( P(E_\iii) \bigr) = \sum_{k=n}^\infty
    k\lambda^k = \lambda^n \frac{1 + (n-1)(1-\lambda)}{(1-\lambda)^2} =:
    \delta_n.
  \end{equation*}
  A short calculation yields
  \begin{equation*}
    \delta_{n}-2\delta_{n+1} = \frac{\lambda^n}{(1-\lambda)^2}
    \bigl( n(1-3\lambda+2\lambda^2)-\lambda \bigr).
  \end{equation*}
  Notice that $1 - 3\lambda + 2\lambda^2 > 0$ since $\lambda < 1/2$. Thus we may choose $n_0$ such that $\delta_n - 2\delta_{n+1} > 0$ for all $n \ge n_0$.

  Fix any $\iii \in I^{n_0}$. The choice of $n_0$ guarantees that for any two incomparable $\jjj,\jjj' \in I^*$, the sets $P(E_{\iii\jjj})$ and $P(E_{\iii\jjj'})$ are disjoint. We can therefore construct a Borel probability measure $\nu$ supported on $F := P(E_\iii)$, by assigning
  equal mass $2^{-n}$ to all the sets $P(E_{\iii\jjj})$ where $\jjj \in I^n$. Recall that $2^{-n}=\lambda^{ns}$ and pick $x \in F$ and $0 < r < \delta_{n_0+1}$. Noting that $\{ \delta_n \}$ is decreasing in $n$, we may fix $n$ for which $\delta_n < r \le \delta_{n-1}$. We obtain
  \begin{equation*}
    \frac{\nu\bigl( B(x,r) \bigr)}{r^s} \le \frac{\nu\bigl(
      B(x,\delta_{n-1}) \bigr)}{\delta_n^s} \le
    \frac{3\lambda^{(n-1)s}}{\delta_n^s}
  \end{equation*}
  since $P(E_{\iii\jjj}) \cap B(x,\delta_{n-1}) \ne \emptyset$ for at most three different $\jjj \in I^{n-1}$. Notice that $\lim_{n \to \infty} \lambda^n/\delta_n = 0$. Now it follows from \cite[proof of Theorem 5.7]{Mattila1995} that $\HH^s(F) = \infty$. Thus $\HH^s(E) = \infty$, as claimed.
\end{example}

\begin{example} \label{ex:nondifferentiable}
  We exhibit a nondifferentiable topological pressure. This example is a modification of \cite[Example 3.5]{FengLau2002}. Let $I = \{ 0,1 \}$ and
\begin{equation*}
  A_0 =
  \begin{pmatrix}
    \tfrac14 & 0 \\
    0 & \tfrac{1}{32}
  \end{pmatrix}, \quad
  A_1 =
  \begin{pmatrix}
    \tfrac14 & 0 \\
    0 & \tfrac12
  \end{pmatrix}.
\end{equation*}
  According to \eqref{eq:diag_topo}, we now have $P(t) = \max\{ P_1(t),P_2(t) \}$, where $P_1(t) = (1-2t)\log 2$ is affine and $P_2(t) = \log(32^{-t} + 2^{-t})$ is strictly convex. It obviously follows that $P$ has a point of nondifferentiability in $(0,1)$, since $P_1(\tfrac12) > P_2(\tfrac12)$ and $P_1(1) < P_2(1)$.
\end{example}

\begin{example} \label{ex:tractable}
  In this example, we construct an affine IFS $\{ A_i + a_i \}_{i \in I}$ on $\R^2$ satisfying $A_i(\overline{\QQ}_2) \subset \QQ_2 \cup \{ 0 \}$ for all $i \in I$, but $\dimm(E) < s$ for $P(s)=0$. This shows that in the definition of the tractable affine IFS, the condition on hyperplanes is indispensable.

  Let $I = \{ 0,1 \}$ and set
  \begin{equation*}
    A_i =
    \begin{pmatrix}
      \ulambda-\beta_i & \theta_i \\
      \beta_i & \ulambda-\theta_i
    \end{pmatrix}
    =
    \begin{pmatrix}
      \theta_i+\llambda_i & \theta_i \\
      \beta_i & \beta_i+\llambda_i
    \end{pmatrix}
  \end{equation*}
  for all $i \in I$, where $\beta_1=\theta_2=17/100$, $\beta_2=\theta_1=13/100$, $\ulambda=1/3$, and $\llambda_i = \ulambda - \beta_i - \theta_i = 1/30$ for all $i \in I$. Now trivially $A_i(\overline{\QQ}_2) \subset \QQ_2 \cup \{ 0 \}$ for all $i \in I$.

  Define $L_q = \{ (x,y) \in \R^2 : x+y=q \}$ for all $q \in \R$. Observe that $\dist(L_q,L_p) = |q-p|/\sqrt{2}$. Now choosing $a_i = (1-\ulambda)(\theta_i,\beta_i)/(\beta_i+\theta_i)$ and letting $f_i = A_i + a_i$, an elementary calculation gives $f_i(L_q) = L_{1-\ulambda(1-q)}$ for all $i \in I$. In particular, $f_i(L_1)=L_1$ and $f_i(L_{1-\ulambda^n})=L_{1-\ulambda^{n+1}}$ for all $n \in \N$. Since $f_i(0,0) \in L_{1-\ulambda}$, we get $f_\iii(0,0) \in L_{1-\ulambda^n}$ for all $\iii \in I^n$. But since $f_\iii(1,0) \in L_1$ for all $\iii \in I^*$, we have
  \begin{equation*}
    \ulambda^n/\sqrt{2} = \dist(L_1,L_{1-\ulambda^n}) \le |f_\iii(1,0)-f_\iii(0,0)| \le \alpha_1(A_\iii)
  \end{equation*}
  for all $\iii \in I^n$.

  On the other hand, since $f_i(x,1-x) = \bigl( \llambda_i x, \llambda_i (1-x) \bigr) + (1-\llambda_i)(\theta_i,\beta_i)/(\beta_i+\theta_i)$ for all $i \in I$ and $x \in \R$, it follows that $f_i|_{L_1}$ is a similitude mapping acting on $L_1$ with a contraction ratio $\llambda_i$. Since both systems, $\{ f_i \}_{i \in I}$ and $\{ f_i|_{L_1} \}_{i \in I}$ have the same invariant set $E$, it follows from the choices of $\ulambda$ and $\llambda_i$ that $\dimm(E) < s$ for $P(s)=0$.

  We remark that this example also shows that the assumption \cite[Hypothesis 3]{HueterLalley1995} is indispensable in the setting of Hueter and Lalley \cite{HueterLalley1995}.
\end{example}

\begin{ack}
  AK is grateful to Thomas Jordan and Pablo Shmerkin for making helpful comments during the preparation of this article. He also thanks the University of North Texas, where this research was started, for warm hospitality.
\end{ack}


\end{document}